\documentclass{amsart}

\numberwithin{equation}{section}
\newtheorem{lem}{Lemma}[section]
\newtheorem{thm}[lem]{Theorem}
\newtheorem{prp}[lem]{Proposition}

\theoremstyle{definition}
\newtheorem{rem}[lem]{Remark}
\newtheorem{ex}[lem]{Example}

\theoremstyle{definition}

\usepackage[psamsfonts]{amssymb}
\usepackage{amsmath}
\def\NN{\mathbb N}
\def\RR{\mathbb R}

\def\PP{\mathbb P}

\newcommand{\R}{\mathbb{R}}
\newcommand{\PPt}{\tilde{\PP}}

\newcommand{\fP}{{\bf P}}
\newcommand{\rank}{\mathrm{rank}}

\date{January 31, 2008}

\begin{document}

\title {On a Two variable class of Bernstein-Szeg\H{o} measures}

\author[A.~Delgado]{Antonia~M.~Delgado}
\thanks{AMD would like to thank the School of Mathematics at Georgia Tech
for its hospitality. Partially supported by Ministerio Educaci\'{o}n y 
Ciencia of Spain MTM2006-13000-C03-02, and Junta de Andaluc{\'\i}a,
Excellence Project P06-FQM-01735.}
\address{AMD, Departamento de Estad{\'\i}stica y Matem\'atica Aplicada,
Universidad de Almer{\'\i}a, La Ca\~nada, 04120 Almer{\'\i}a, Spain}
\email{adelgado@ual.es}

\author[J.~Geronimo]{Jeffrey~S.~Geronimo}\thanks{JSG was partially
supported by an NSF collaborative linkage grant}
\address{JSG, School of Mathematics, Georgia Institute of Technology,
Atlanta, GA 30332--0160, USA}
\email{geronimo@math.gatech.edu}

\author[P.~Iliev]{Plamen~Iliev}
\address{PI, School of Mathematics, Georgia Institute of Technology,
Atlanta, GA 30332--0160, USA}
\email{iliev@math.gatech.edu}

\author[Y.~Xu]{Yuan~Xu}
\address{YX, Department of Mathematics, University of Oregon, Eugene, 
OR 97403--1222, USA}
\thanks{YX was partially supported by NSF grant DMS-0604056}
\email{yuan@math.uoregon.edu}

\subjclass[2000]{42C05, 30E05, 47A57}

\begin{abstract}
The one variable Bernstein-Szeg\H{o} theory for orthogonal
polynomials on the real line is extended to a class of two variable measures. 
The polynomials orthonormal in the total degree ordering and the
lexicographical ordering are constructed and their recurrence coefficients
discussed.

 \end{abstract}

\maketitle

\section{Introduction}

Let $\mu(x,y)$ be a Borel measure supported on $\R^2$, such that 
$$\int_{\R^2}f(x,y)d\mu(x,y)<\infty$$ for every polynomial $f(x,y)$.
We would like to construct polynomials orthogonal with respect to
this measure. Unlike one variable there are many natural orderings
of the monomials $x^n y^m$ and each of these orderings gives a different
set of orthogonal polynomials. Starting with \cite{J}, 
the preferred  ordering is the total degree ordering and for polynomials with 
the same total degree the ordering is lexicographical, that is 
$$(k,l)<_{\text{td}}(k_1,l_1)$$ 
if 
$$k+l<k_1+l_1\ {\rm or}\ (k+l=k_1+l_1\ (k,l)<_{\text{lex}} (k_1,l_1)),$$
where lex means the lexicographical ordering (see below). In other words, we 
apply the Gram-Schmidt process to the polynomials ordered as follows
$\{1,y,x,y^2,xy$, $x^2,\dots\}$. 
Let $p^r_n$ denote the polynomial of total degree $n$ such that
\begin{equation}\label{sorthogonaltdeg}
p_n^{r}(x,y) = k^{r,n-r}_{n,r} x^r y^{n-r} + 
\sum_{(i,j)<_{\rm td}(r,n-r)} k^{i,j}_{n,r}x^iy^j,
\end{equation} 
with $k^{r,n-r}_{n,r}>0$ satisfying
\begin{equation}\label{torthogonal}
\begin{split}
&\int_{\R^2} p_{n}^r x^i y^j d\mu(x,y)=0, \quad  0\le i+j<n \text{ or } i+j=n 
\text{ and } 0\le i< r,
\\
&\int_{\R^2} p_{n}^r p_{n}^r d\mu(x,y)=1.
\end{split}
\end{equation}
Let $\fP_n(x,y)$ be the $(n+1)$ 
dimensional vector with components the orthonormal polynomials of total degree 
$n$. The multiplications by $x$ and $y$ are given by three term 
recurrence relations
\begin{align}
&x\fP_n=A_{x,n}\fP_{n+1}+B_{x,n}\fP_n+A^t_{x,n-1}\fP_{n-1}\label{1.1}\\
&y\fP_n=A_{y,n}\fP_{n+1}+B_{y,n}\fP_n+A^t_{y,n-1}\fP_{n-1}.\label{1.2}
\end{align}
where $A_{x,n}$, $A_{y,n}$ are $(n+1)\times (n+2)$ matrices such that 
$\rank(A_{x,n})=\rank(A_{y,n})=n+1$, $B_{x,n}$, 
$B_{y,n}$ are symmetric $(n+1)\times (n+1)$ matrices and $M^t$ denotes the
transpose of the matrix $M$. Notice that the block Jacobi 
matrices corresponding to multiplications by $x$ and $y$ will commute which 
amounts to certain commutativity relations between the matrices defined above, 
see \cite{DX} for more details. Since monomials with the same total degree
are ordered lexicographically we see that $A_{y,n}$ is ``lower triangular'' 
with positive entries in the $(i,i),\ i= 1\dots,n+1$ positions and $A_{x,n}$ 
is lower Hessenberg with positive entries in $(i,i+1),\ i=1,\dots, 
n+1$ entries.

Recently an alternative way to approach two dimensional orthogonal polynomials 
was proposed in \cite{DGIM} by relating them to the theory of matrix valued 
orthogonal polynomials. This can be accomplished by using the lexicographical 
ordering,
$$
(k,l)<_{\rm lex} (k_1,l_1)\Leftrightarrow k<k_1\mbox{ or }
(k=k_1\mbox{ and } l<l_1),
$$ or the reverse lexicographical ordering
$$
(k,l)<_{\rm revlex} (k_1,l_1)\Leftrightarrow
(l,k)<_{\rm lex} (l_1,k_1),
$$
to arrange 
the monomials. This naturally connects the theory of bivariate orthogonal 
polynomials to doubly Hankel matrices, that is block Hankel matrices whose 
entries are Hankel matrices.

For every nonnegative integer $m$ we apply the Gram-Schmidt process to the 
basis of monomials ordered in lexicographical order i.e.
$$\{1,y,\dots, y^{m},x,xy,\dots,xy^{m},x^2,\dots\},$$ 
and define the orthonormal polynomials $p_{n,m}^l(x,y)$ where $ 0\le l\le m,$ 
by the equations,
\begin{equation}\label{sorthogonal}
\begin{split}
&\int_{\R^2} p_{n,m}^l x^i y^jd\mu(x,y)=0, \quad  0\le i<n\ {\rm and }\
0\le j\le m\ {\rm\ or}\ i=n\ {\rm and }\ 0\le j< l,
\\
&\int_{\R^2} p_{n,m}^l p_{n,m}^l d\mu(x,y)=1,
\end{split}
\end{equation}
and
\begin{equation}\label{sorthogonaldeg}
p_{n,m}^{l}(x,y) = k^{n,l}_{n,m,l} x^n y^l + \sum_{(i,j)<_{\rm
lex}(n,l)} k^{i,j}_{n,m,l}x^iy^j.
\end{equation}
With the convention $k^{n,l}_{n,m,l}>0$, the above equations uniquely
specify $p^l_{n,m}$. Polynomials orthonormal with respect to
$d\mu$ but using the reverse lexicographical ordering will be
denoted by $\tilde p^l_{n,m}$. They are uniquely determined by the
above relations with the roles of $n$ and $m$ interchanged. Set
\begin{equation}\label{vectpoly}
\PP_{n,m}=\left[\begin{matrix} p_{n,m}^{0}\\ p_{n,m}^{1}\\
\vdots\\ p_{n,m}^{m} \end{matrix}\right].
\end{equation}

The polynomials $\PP_{n,m}$ may be obtained in an alternate manner as follows. 
We associate an $(m+1)\times(m+1)$ matrix 
valued measure $dM^{m+1}(x)$ by taking 
\begin{equation}\label{matrixmeasure}
dM^{m+1}(x)=\int_{\R}[1,y,\dots,y^{m}]^td\mu(x,y)[1,y,\dots,y^{m}],
\end{equation}
where the above integral is with respect to $y$. Let us denote by 
$\{P^m_n\}_{n=0}^{\infty}$ the sequence of $(m+1)\times(m+1)$ matrix valued 
polynomials satisfying
\begin{align}\label{defp}
P^m_n(x)=K_{n,n}^m x^n + {\rm lower\ order\ terms},\\\
\int_{\R}P^m_n(x)dM^{m+1}(P^m_k (x))^t=\delta_{k,n}I_{m+1}
\end{align} 
with $K_{n,n}^m$ a lower triangular matrix with strictly positive  diagonal 
entries. The above conditions uniquely specify these left matrix valued 
orthogonal polynomials and it follows that 
\begin{equation}\label{connectionform}
\PP_{n,m}(x,y)=P^m_n(x)[1,y,\dots,y^m]^t.
\end{equation} 

From the relation between $\PP_{n,m}$ and the matrix orthogonal polynomials we 
see that the following recurrence formulas hold

\begin{equation}\label{lexrec}
 x \PP_{n,m} = A_{n+1,m} \PP_{n+1,m} + B_{n,m} \PP_{n,m} +
A_{n,m}^t \PP_{n-1,m},
\end{equation}
where $A_{n,m}$ and $B_{n,m}$ are matrices of size $(m+1)\times(m+1)$ given by,
\begin{align}
A_{n,m} & = \langle x\PP_{n-1,m}, \PP_{n,m}\rangle, \label{anm}
\\
B_{n,m} & = \langle x\PP_{n,m}, \PP_{n,m}\rangle.
\label{bnm}
\end{align}
Here $A_{n,m}$ is lower triangular with positive diagonal entries and
$B_{n,m}$ is symmetric.
The analogous formula for $\PPt_{n,m}(x,y)$ is
\begin{equation}\label{revlexrec}
 y \PPt_{n,m} = \tilde A_{n,m+1} \PPt_{n,m+1} + \tilde B_{n,m} \PPt_{n,m} +
\tilde A_{n,m}^t \PPt_{n,m-1}.
\end{equation}

There has been much work done in studying bivariate orthogonal polynomials
in the the total degree ordering and we refer to the books 
\cite{Ber,DX,S}, and references therein. 
For Bernstein-Szeg\H{o} polynomials related to root systems see
the newer article \cite{vDMR}.

In this article we will consider a special class of ``Bernstein-Szeg\H{o}" 
measures (see below for the definition) which are generalizations of the
examples considered in \cite{GI} and  construct orthogonal polynomials 
where the orderings used will be the total degree ordering or the 
lexicographical ordering. We will do this in order to investigate and compare 
the properties of the polynomials constructed. The paper is organized as
follows: In section 2 we briefly review the one variable Bernstein-Szeg\H{o}
theory. We then consider two variable generalizations of this theory in
section 3. Orthonormal polynomials in the total degree ordering as well
as the lexicographical and reverse lexicographical orderings are
considered. In section 4 we investigate the properties of the
coefficients in the recurrence formulas satisfied by these polynomials and
in section 5 we consider several examples including the ones discussed in 
\cite{GI}.

\section{One variable Bernstein-Szeg\H{o} measures}

Consider a  measure supported on $[-1,1]$ absolutely continuous with respect 
to Lebesgue measure of the form $d\mu(x)=w(x)dx$ with 
$w(x)=(1-x^2)^{1/2}/\rho(x)$ where $\rho(x)$ is a polynomial of degree $l$ 
positive on $[-1,1]$. Such measures are in the class of Bernstein-Szeg\H{o} 
measures which have many nice properties. We call a polynomial $h(z)$
stable if  $h(z)\ne0,\ |z|\le1$. We have 
\cite{Szego}

\begin{thm}\label{th2.1}
Let $w(x)=(1-x^2)^{1/2}/\rho(x)$ where $\rho(x)$ is a polynomial 
of exact degree $N$ and is positive on $[-1,1]$. Let 
$\rho(x)=|h(e^{i\theta})|^2$ where $h(z)$ is a stable polynomial of exact 
degree $N$ in $z$ such that $h(0)>0$. Then
$$
p_n(x)=\left(\frac{2}{\pi}\right)^{1/2}(\sin{\theta})^{-1}\Im(e^{i(n+1)\theta} 
\bar h(e^{i\theta})), \qquad x=\cos\theta, \ N < 2(n+1),
$$
is an orthonormal polynomial of degree $n$ associated with $w$. 
\end{thm}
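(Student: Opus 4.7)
The plan is to pass from the real line to the unit circle via $x=\cos\theta$, $z=e^{i\theta}$, and reduce the whole theorem to two contour integrals that vanish by Cauchy's theorem thanks to the stability of $h$. A preliminary remark is that $h$ can be taken to have real coefficients: the zeros of $z^{N}\rho(\cos\theta)$, viewed as a polynomial in $z$, come both in reciprocal pairs $\{\alpha,1/\alpha\}$ (from the $\theta\mapsto-\theta$ symmetry) and in conjugate pairs $\{\alpha,\bar\alpha\}$ (because $\rho$ is real), so the $N$ roots lying outside $|z|\le 1$ form a conjugation-invariant set. Henceforth I identify $\overline{h(e^{i\theta})}=h(1/z)$ on $|z|=1$.

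To check that $p_n$ is a polynomial of degree exactly $n$ in $x$, I would expand
$$
e^{i(n+1)\theta}\,\overline{h(e^{i\theta})}=\sum_{j=0}^{N}h_{j}\,e^{i(n+1-j)\theta},
$$
take imaginary parts, and divide by $\sin\theta$ using $\sin(k\theta)/\sin\theta=\pm U_{|k|-1}(\cos\theta)$ (sign matching the sign of $k$, with $k=0$ giving $0$). The assumption $N<2(n+1)$ keeps $|n+1-j|\le n$ for every relevant $j$, so what emerges is a linear combination of $U_{0},\dots,U_{n}$; the $j=0$ summand contributes $h_{0}\,U_{n}(\cos\theta)$ and $h_{0}=h(0)>0$ pins the degree at exactly $n$.

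For orthogonality with $0\le k<n$, write
$$
p_{n}(\cos\theta)\sin\theta=\frac{1}{i\sqrt{2\pi}}\bigl[z^{n+1}h(1/z)-z^{-(n+1)}h(z)\bigr]
$$
on $|z|=1$, use $\rho(\cos\theta)=h(z)h(1/z)$ in the weight, extend the $\theta$-integral to $[-\pi,\pi]$ by parity, and convert to a contour integral via $d\theta=dz/(iz)$. The substitution $z\leftrightarrow 1/z$ sends one bracket term to the negative of the other, so the whole expression collapses to a multiple of
$$
\oint_{|z|=1}\frac{z^{n}(z+z^{-1})^{k}(z-z^{-1})}{h(z)}\,dz.
$$
For $k<n$ the numerator is a genuine polynomial in $z$ (its lowest power is $z^{n-k-1}$ with $n-k-1\ge 0$), and since $h$ is stable the whole integrand is holomorphic in $|z|\le 1$; Cauchy's theorem kills the integral. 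Normalization is entirely analogous: squaring the imaginary part produces a cross term $\tfrac{1}{2}|h(e^{i\theta})|^{2}$, which after division by $\rho=|h|^{2}$ and multiplication by the $2/\pi$ prefactor integrates to exactly $1$, while the two remaining terms become contour integrals of the shape $\oint z^{2n+1}h(1/z)/h(z)\,dz$. Here $z^{2n+1}h(1/z)$ is a polynomial in $z$ precisely because $N\le 2n+1$, and the same holomorphicity argument forces both of these integrals to vanish.

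The only real obstacle is the algebraic bookkeeping: fixing the interpretation of $\bar h$, tracking the parity of each factor needed to pass from $[0,\pi]$ to $[-\pi,\pi]$, and verifying that after the dust settles the integrand is indeed a polynomial in $z$ divided by a function zero-free on $|z|\le 1$. Once that setup is done, both the orthogonality and the normalization reduce to one-line applications of Cauchy's theorem driven by the stability of $h$.
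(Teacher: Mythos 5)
Your proposal is correct and follows essentially the same route as the paper, whose proof of this theorem is carried out (for real coefficients) in Lemma~\ref{sbtwo}: write $p_n\sin\theta$ as $\frac{1}{i\sqrt{2\pi}}\bigl[z^{n+1}h(1/z)-z^{-n-1}h(z)\bigr]$, use parity to extend the integral to $[-\pi,\pi]$, convert to a contour integral over $|z|=1$, and invoke stability of $h$ plus Cauchy's theorem, with $N<2(n+1)$ ensuring both the degree count and the vanishing of the $z^{2n+1}h(1/z)/h(z)$ term. The only cosmetic differences are that you test against monomials $x^k$ rather than Chebyshev polynomials $U_j$ and compute the norm by squaring directly instead of reducing to the leading Chebyshev term, which changes nothing essential.
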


The above Theorem shows that the orthogonal polynomials associated with
Bernstein-Szeg\H{o} weights can for large enough $n$ be written as a linear
combinations of Chebyshev polynomials (for instance \cite{Gr}) . 
A proof of the above Theorem in the case where the coefficients of $h$ are 
real is given in Lemma~\ref{sbtwo}. For real coefficients the above formula holds for $N =
2(n+1)$ if the right hand side is multiplied by 
$(1-h_N/h_0)^{-1/2}$, where $h_0=h(0)$ and $h_N$ is the coefficient of $z^N$ 
in $h(z)$  We now show how to construct orthogonal 
polynomials of degree $\le  \lceil \frac{N-2}{2}\rceil -1$.

\begin{prp}\label{pr2.2}
Assume that $h(z)=1+ h_1 z + \cdots+ h_N z^N$ is a stable polynomial of degree 
$N$ with real coefficients. Let $q_k$ be defined by
\begin{equation}\label{qk(x)}
   q_k(x) = \sum_{i=0}^N h_i U_{k-i}(x), \qquad  k \ge 0,
\end{equation}
where $U_n(x) =-U_{-n-2}(x)$ for $n < 0$. For 
$0 \le k \le \lceil\frac{N-2}{2}\rceil -1$, there are constants $h_k'$ such 
that 
$$
\hat q_k(x) : =  q_k(x) + h_{k+1}' q_{k+1}(x)+ \cdots + h_{N-k+2}' q_{N-k-2}(x) $$
is a polynomial of degree $k$ and orthogonal to every polynomial of degree 
less than $k$ with respect to 
$$
d\mu(x) = \frac{\sqrt{1-x^2} dx}{|h(z)|^2}, \qquad x = (z+ 1/z)/2,
$$
where $h_{N-k+2}' = h_N$, $h_{N-k+1}' = h_{N-1}-h_N h_1$, and the other 
$h_{j}'$ can be deduced inductively from $h_j$.
\end{prp}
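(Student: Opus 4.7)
The plan is to deduce the existence of $\hat q_k$ from uniqueness of the degree-$k$ orthogonal polynomial via a dimension count, rather than by solving the system for the $h_j'$ head-on. First I would establish the fundamental orthogonality
\begin{equation*}
\int_{-1}^1 q_j(x)\, x^m\, d\mu(x) = 0 \qquad (0 \le m < j)
\end{equation*}
for every $j \ge 0$. This follows by substituting $x = \cos\theta$ and using $q_j(\cos\theta)\sin\theta = \mathrm{Im}(e^{i(j+1)\theta}\bar h(e^{i\theta}))$; since $h$ is stable, $1/h$ is analytic on $|z| \le 1$, and the resulting contour integral of a Laurent polynomial whose positive- and negative-frequency parts are separated from the spectrum of $\cos^m\theta\sin\theta$ vanishes (this is the standard Szeg\H{o} argument that underlies Theorem~\ref{th2.1}). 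Consequently, every linear combination of $q_k, q_{k+1}, \ldots, q_{N-k-2}$ is automatically orthogonal to all polynomials of degree less than $k$ with respect to $d\mu$, independently of the coefficients.

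The main obstacle is then to show that $\{q_k, q_{k+1}, \ldots, q_{N-k-2}\}$ is linearly independent. I would translate a hypothetical relation $\sum_{j=k}^{N-k-2} c_j q_j = 0$ into the Chebyshev $U$ basis: expanding $q_j = \sum_{n \ge 0}(h_{j-n} - h_{j+n+2})U_n$ (with $h_i = 0$ outside $[0,N]$), the vanishing conditions read as symmetry relations on the coefficients of $C(z)h(1/z)$, where $C(z) = \sum_j c_j z^j$. These reduce to the polynomial identity
\begin{equation*}
\tilde C(z)\, h^*(z) = \tilde C^*(z)\, h(z),
\end{equation*}
where $\tilde C(z) = C(z)/z^k$ has degree at most $N-2k-2$, $\tilde C^*$ denotes its reciprocal of that degree, and $h^*(z) = z^N h(1/z)$. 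Since $h$ is stable, all roots of $h$ lie in $|z|>1$ while all roots of $h^*$ lie in $|z|<1$, so $\gcd(h, h^*) = 1$ in $\mathbb{C}[z]$. Hence $h$ divides $\tilde C\, h^*$, and coprimeness forces $h \mid \tilde C$; but $\deg \tilde C \le N-2k-2 < N$ for $k$ in the stated range, so $\tilde C = 0$ and all $c_j$ vanish. This coprimeness argument is the heart of the proof.

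Once linear independence is in hand, the rest is a dimension match. Let $V = \mathrm{span}\{q_k,\ldots,q_{N-k-2}\}$, now of dimension $N-2k-1$, and let $W$ be the space of polynomials of degree at most $N-k-2$ orthogonal to $\{1,x,\ldots,x^{k-1}\}$ with respect to $d\mu$. Then $V \subseteq W$ by the first paragraph, and $\dim W = (N-k-1) - k = N-2k-1$, so $V = W$. Since $d\mu$ is a positive measure with infinite support, Gram--Schmidt yields a unique (up to sign) orthogonal polynomial $p_k$ of exact degree $k$, which must lie in $W = V$; writing $p_k = \sum_{j=k}^{N-k-2} c_j q_j$, the coefficient $c_k$ must be nonzero, since otherwise each remaining $q_j$ with $j > k$ would satisfy $\int q_j\, x^k\, d\mu = 0$, giving $\int p_k\, x^k\, d\mu = 0$ and contradicting $\int p_k^2\, d\mu > 0$. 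Normalizing so that $c_k = 1$ produces $\hat q_k = q_k + h'_{k+1} q_{k+1} + \cdots + h'_{N-k-2} q_{N-k-2}$, and the $h'_j$ can then be read off inductively from the now-nonsingular linear system, starting with $h'_{N-k-2} = h_N$ (from the highest-degree equation) and $h'_{N-k-3} = h_{N-1} - h_N h_1$, and descending.
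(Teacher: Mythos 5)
Your proposal is correct, but it takes a genuinely different route from the paper's. The paper argues constructively: after invoking Lemma~\ref{sbtwo} for the orthogonality (exactly as you do in your first paragraph), it expands each $q_j$ in the Chebyshev basis via $U_n=-U_{-n-2}$, observes that $\deg q_k\le N-k-2$ in the stated range, and then eliminates the top Chebyshev coefficients one at a time --- adding $h_N\,q_{N-k-2}$ to kill $U_{N-k-2}$, then $(h_{N-1}-h_Nh_1)\,q_{N-k-3}$ to kill $U_{N-k-3}$, and so on --- the triangularity of this system (guaranteed by $h_0=1$) producing the $h_j'$ recursively. You instead prove linear independence of $\{q_k,\dots,q_{N-k-2}\}$ via the palindromic identity $\tilde C(z)h^*(z)=\tilde C^*(z)h(z)$ together with the coprimeness $\gcd(h,h^*)=1$ forced by stability (roots of $h$ in $|z|>1$, of $h^*$ in $|z|<1$), and then match $\dim V=N-2k-1=\dim W$ to conclude that $V$ is the entire space of polynomials of degree at most $N-k-2$ orthogonal to all of $1,x,\dots,x^{k-1}$, so the degree-$k$ orthogonal polynomial lies in $V$ and can be normalized at $q_k$. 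What the paper's elimination buys is the explicit recursion for the $h_j'$, which is precisely how the stated values $h_N$ and $h_{N-1}-h_Nh_1$ arise; what your argument buys is structural information the paper leaves implicit: the representation in the $q_j$ is unique, $c_k\ne 0$, and $\hat q_k$ has \emph{exact} degree $k$ --- the paper's elimination a priori only yields degree at most $k$, with no check that the $U_k$-coefficient survives, whereas your observation that $\int \hat q_k\, x^k\,d\mu\ne 0$ settles this cleanly. Two small points worth making explicit: the $k$ moment conditions cutting out $W$ are independent because the Gram matrix of $1,\dots,x^{k-1}$ with respect to the positive measure $d\mu$ is positive definite and $k-1\le N-k-2$ throughout the stated range of $k$; and your recovered leading values $h'_{N-k-2}=h_N$ and $h'_{N-k-3}=h_{N-1}-h_Nh_1$ agree with the paper's proof, the subscripts $h'_{N-k+2}$, $h'_{N-k+1}$ in the proposition's statement being an evident index typo.
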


\begin{proof}
Lemma~\ref{sbtwo}  shows that $q_k$ is orthogonal to all polynomials of
degree at most $k-1$. Thus, it follows readily that for 
$0 \le k \le \lceil \frac{N-2}{2}\rceil -1$, $\hat q_k$ is orthogonal to all 
polynomials of degree at most $k-1$. We now prove that  we can choose 
constants $h_j'$ such that $\hat q_k$ is of degree $k$. 
Using $U_n(x) = -U_{-n-2}(x)$ for $n < 0$, we can write 
\begin{equation}\label{qk(x)2}
  q_k(x) = - \sum_{i=k+1}^{N-k-2} h_{k+i+2} U_i(x) +
     \sum_{i=0}^k (h_{k-i} - h_{k+i+2} ) U_i(x), \qquad k \le  
\left\lceil \frac{N-2}{2}\right\rceil -1,
\end{equation}
in which the first sum contains terms that have degree $>k$.  In particular, 
$\deg q_k \le  N-k-2$ for $k \le  \lceil \frac{N-2}{2}\rceil -1$. 
Furthermore, for $k+1 \le j \le N-1$ we can write, by \eqref{qk(x)} and the 
fact that $U_n(x) = -U_{-n-2}(x)$ for $n < 0$,
$$
  q_j(x) = \sum_{i=k+1}^{j} h_{j-i} U_i(x) +
     \sum_{i=0}^k h_{j-i}U_i(x) - \sum_{i=0}^{N-j-2} h_{j+i+2} U_i(x),
$$
where again the first sum contains terms that have degree $>k$ in $x$.  
Since $h_0(y)= 1$, it follows readily that 
$$
  q_k(x) +h_N q_{N-k-2}(x) = - \sum_{i=k+1}^{N-k-3}
    \left(h_{k+i+2}- h_N h_{N-k-2-i} \right) U_i(x) + \cdots,
$$
where only terms whose degree $\ge k +1$ are given explicitly in the right 
hand side. Thus the right hand side of the above expression has 
degree $N-k-3$.
Continuing, we add $(h_{N-1} - h_N h_1) q_{N-k-3}$ to 
eliminate $U_{N-k-3}$. Proceeding in this way, 
we keep adding terms until the right hand side contains only terms of degree
$\le k$. This proves that $\hat q_k$ is indeed a polynomial of degree $k$. 
\end{proof}

\begin{ex}
The first non-trivial case of Theorem~\ref{th2.1} and
Proposition~\ref{pr2.2} appears when $N =5$
for which $\lceil \frac{N-2}{2}\rceil =2$ and we are missing the orthogonal 
polynomial of first order. In this case, the formula \eqref{qk(x)} shows that 
\begin{align*}
 & q_1(x) = - h_5 U_2(x) + (h_0-h_4) U_1(x) +(h_1-h_3) U_0(x) \\
 & q_2(x) = h_0 U_2(x) + (h_1-h_5) U_1(x) +(h_2-h_4) U_0(x).
\end{align*}
Our $\hat q_1$ is given by 
$$
  \hat q_1(x) = q_1(x) + h_5 q_2(x) = [(h_1-h_3) + h_5(h_2-h_4)] U_1(x)
          + [(h_0-h_4) + h_5(h_1-h_5)] U_0(x),
$$
indeed a polynomial of degree $1$.
\end{ex}

\begin{rem}
The above proposition shows how a complete orthogonal basis can be derived 
for the Bernstein-Szeg\H{o} weight function. A similar approach will be
used in the construction of bivariate polynomials orthogonal in the
lexicographical ordering. However, this 
procedure will not help us with the construction of bivariate polynomials 
orthogonal in total degree ordering. Indeed, when $h_k$ are polynomials of 
$y$, the degree of $\hat q_k$  as a polynomial of two variables could have 
total degree greater than $k$. 
\end{rem}

We will also investigate the recurrence coefficients  associated with 
Bernstein-Szeg\H{o} weights.
Let $\mu(x)$ be a positive measure supported on the real line with an infinite 
number of points of increase and $\{p_m\}$ be the set of polynomials of degree 
$m$ orthonormal with respect to $\mu$, each with positive leading 
coefficient. Then it is well known that they satisfy the recurrence relation
$$xp_m(x)=a_{m+1}p_{m+1}(x)+b_m p_m(x)+a_m p_{m-1}(x). $$
If the measure $\mu$ is of the form considered in Theorem~\ref{th2.1} then
the formula for $p_m$ above shows that for $l<2m$ they satisfy the 
same recurrence formula as the Chebyshev polynomials i.e $a_m=1/2$ and 
$b_m=0$. For extensions of this result see \cite{DS}.

\section{A class of Two Variable Bernstein-Szeg\H{o} polynomials.}
Here we study two variable polynomials orthogonal with respect to a
Bernstein-Szeg\H{o} weight of the form
\begin{equation*}
\sigma(x,y)= \frac{\sqrt{1-x^2} \sqrt{1-y^2}}{\rho(x,y)},
\end{equation*}
where $\rho$ is a polynomial in $x$ and $y$, positive for $-1\le x\le1$ and  
$-1\le y\le1$.

\begin{lem}\label{sbtwo}
Let $N\in\NN$ be fixed positive integer. For any $i=0,\dots,N$ let
$h_i(y)$ be polynomials in $y$ with real coefficients of degree at most 
$\frac{N}{2} - |\frac{N}{2}-i|$, with $h_0(y)=1$, such that
\begin{equation}\label{h}
h(z,y)=\sum_{i=0}^{N}h_i(y)z^i,
\end{equation}
is a stable polynomial in $z$ for all $-1\le y\le1$,  
i.e. $h(z,y)\ne0$ for any $|z|\leq 1$.

Define
\begin{equation}\label{qk}
q_k(x,y)=\sum_{i=0}^{N} h_i(y)U_{k-i}(x),
\end{equation}
where $U_n(x)$ is the $n$-th Chebyshev polynomial of the second
kind. Here, if $n<0$ the Chebyshev polynomial is understood as
$U_n(x)=-U_{-n-2}(x)$.

Then $q_k(x,y)$ is a polynomial in two variables, which is orthogonal to every 
polynomial in $x$ of degree less than $k$ with respect to
\begin{equation*}
d\mu_y(x)=\frac{\sqrt{1-x^2}}{|h(z,y)|^2}dx, \qquad x = (z + 1/z)/2.
\end{equation*}
Moreover for  $k \ge \lceil \frac{N -2}{2} \rceil $,  $q_k(x,y)$ is a 
polynomial of total degree $k$ and 
$$
\int_{-1}^1 q_k^2(x,y)d\mu_y(x)=\begin{cases} \frac{\pi}{2} & \text{ if } k \ge \lceil
    \frac{N -1}{2} \rceil\\   \frac{\pi}{2} (1- h_N)& \text{ if }
     2k+2=N\end{cases} .
$$ 

\end{lem}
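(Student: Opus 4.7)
The plan is to substitute $x=\cos\theta$ with $z=e^{i\theta}$ and reduce everything to contour integrals on $|z|=1$, exploiting that $h(z,y)$ is stable so $1/h(z,y)$ is analytic in $|z|\le 1$. The key preliminary identity I would establish is
$$q_k(\cos\theta,y)\,\sin\theta \;=\; \frac{1}{2i}\bigl[z^{k+1}h(1/z,y)-z^{-k-1}h(z,y)\bigr],$$
which follows from the classical formula $U_n(\cos\theta)\sin\theta=\sin((n+1)\theta)$ (which remains valid for all indices in the sum once the convention $U_n=-U_{-n-2}$ is in force), together with the reality of the coefficients $h_i(y)$ and the identity $h(1/z,y)=\overline{h(z,y)}$ on $|z|=1$.

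For the orthogonality claim I would compute $\int_{-1}^1 q_k(x,y)\,x^j\,d\mu_y$ for $0\le j<k$ by setting $x=\cos\theta$, extending by parity from $(0,\pi)$ to $(-\pi,\pi)$, writing $\cos^j\theta=(z+1/z)^j/2^j$, $\sin\theta=(z-1/z)/(2i)$, and $d\theta=dz/(iz)$, and substituting the preliminary identity. The two terms from that identity produce two contour integrals that merge under $z\mapsto 1/z$ into a single integral of
$$\frac{(z^2+1)^j(z^2-1)\,z^{k-j-1}}{h(z,y)}$$
around $|z|=1$. Since $j<k$ makes $k-j-1\ge 0$, this integrand is analytic in $|z|\le 1$ by stability of $h$, and Cauchy's theorem yields zero.

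For the total-degree claim I would split $q_k=\sum_{i=0}^N h_i(y)U_{k-i}(x)$ into three ranges of $i$. For $0\le i\le k$, $U_{k-i}(x)$ has $x$-degree $k-i$ and $\deg_y h_i\le\min(i,N-i)$, so each such term has total degree at most $k$; the $i=0$ term supplies the leading monomial $2^k x^k$. The $i=k+1$ term vanishes since $U_{-1}=0$. For $i\ge k+2$, the convention gives $U_{k-i}=-U_{i-k-2}$ of $x$-degree $i-k-2$; and the hypothesis $k\ge\lceil(N-2)/2\rceil$ forces $i>N/2$ on this range, so $\deg_y h_i\le N-i$ and the total degree is at most $(i-k-2)+(N-i)=N-k-2\le k$. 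Hence $\deg q_k=k$ exactly.

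The main calculation, which I expect to demand the most care, is the norm. Squaring the preliminary identity and dividing by $|h|^2=h(z,y)h(1/z,y)$ on $|z|=1$ gives
$$\frac{q_k^2\,\sin^2\theta}{|h|^2} \;=\; \frac{1}{2} - \frac{1}{4}\biggl[\frac{z^{2k+2}h(1/z,y)}{h(z,y)} + \frac{z^{-2k-2}h(z,y)}{h(1/z,y)}\biggr].$$
Integrating over $(-\pi,\pi)$ and using $z\mapsto 1/z$ symmetry to identify the two cross integrals, one obtains $\pi-\pi\cdot\mathrm{Res}_{z=0}\bigl[z^{2k+1}h(1/z,y)/h(z,y)\bigr]$, and dividing by the factor of $2$ from the parity extension yields $\int_{-1}^1 q_k^2\,d\mu_y=(\pi/2)(1-\mathrm{Res})$. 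Expanding $z^{2k+1}h(1/z,y)=\sum_{i=0}^N h_i(y)z^{2k+1-i}$ and using that $1/h(z,y)$ is a power series with constant term $1$, the residue is $0$ when $2k+2>N$ (no negative powers appear at all), and equals the constant $h_N$ when $2k+2=N$ (only $i=N$ contributes a $z^{-1}$). This reproduces the two stated values.
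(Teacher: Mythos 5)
Your proposal is correct, and its core is the same Szeg\H{o}-style argument the paper uses: your preliminary identity $q_k(\cos\theta,y)\sin\theta=\frac{1}{2i}\bigl[z^{k+1}h(1/z,y)-z^{-k-1}h(z,y)\bigr]$ is precisely the paper's formula $q_k=\frac{1}{z-1/z}\bigl[z^{k+1}h(1/z,y)-z^{-k-1}h(z,y)\bigr]$, and your orthogonality computation (parity extension to $(-\pi,\pi)$, merging the two terms under $z\mapsto 1/z$, Cauchy's theorem via stability) is the paper's, with the immaterial change of testing against $x^j$ instead of $U_j(x)$. Where you genuinely diverge is the norm: the paper first invokes the just-proved orthogonality to replace one factor of $q_k$ by its leading Chebyshev term ($h_0U_k$, or $(h_0-h_N)U_k$ when $2k+2=N$) and then reuses its contour formula with $j=k$, handling the two cases separately; you instead square the identity to get $q_k^2\sin^2\theta/|h|^2=\frac12-\frac14\bigl[z^{2k+2}h(1/z,y)/h(z,y)+z^{-2k-2}h(z,y)/h(1/z,y)\bigr]$ and reduce both cases to the single residue $\mathrm{Res}_{z=0}\bigl[z^{2k+1}h(1/z,y)/h(z,y)\bigr]$, correctly evaluated as $0$ when $2k+2>N$ and as the constant $h_N$ when $2k+2=N$. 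This is a clean, unified variant that makes the origin of the factor $1-h_N$ transparent, at the cost of needing the orthogonality step not at all (the paper's route recycles it). One small point to patch in your degree argument: in the boundary case $2k+2=N$ the range $i\ge k+2$ contains $i=N$, whose term is $-h_NU_k(x)$ with $h_N$ a constant (its degree bound is $0$), so it hits the pure monomial $x^k$ and the leading coefficient is $(1-h_N)2^k$, not $2^k$; to conclude $\deg q_k=k$ \emph{exactly} you must rule out $h_N=1$, which follows from stability, since $1=h_0(y)=h(0,y)=h_N(y)\prod_j(-z_j(y))$ with all roots satisfying $|z_j(y)|>1$ forces $|h_N(y)|<1$ --- this is exactly why the paper opens its proof with the remark that $|h_N(y)|<1$.
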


\begin{proof}

We begin by showing that $q_k$ is of total degree $k$ when 
$k \ge \lceil \frac{N -2}{2} \rceil $. Note that $|h_N(y)|<1$ follows from the 
stability of $h$. If $k \ge N$, then 
$q_k$ is of degree $k$ follows obviously from $\frac{N}{2} - 
|\frac{N}{2}-i| \le i$. Let now $k \le N-1$. Then using the fact that 
$U_{-j-2}(x) = -U_j(x)$ for $j <0$, we have
\begin{equation} \label{qk1}
q_k(x,y)= \sum_{i=0}^k h_i(y) U_{k-i}(x) - \sum_{i=k+2}^N h_i(y) U_{i-k-2}(x). 
\end{equation}
It is easy to see that $\frac{N}{2} - |\frac{N}{2}-i| + i-k-2 \le N-k-2 \le k$ 
as $k \ge \lceil \frac{N -2}{2} \rceil $. Hence, $q_k$ is of degree $k$. 

To establish the claimed orthogonality properties of $q_k$ we follow 
Szeg\H{o}.  Since $U_n(x)=\frac{z^{n+1}-z^{-n-1}}{z-1/z}$, the
polynomials $q_k(x,y)$ can be written in the following form
\begin{align*}
q_k(x,y)&=\frac{1}{z-1/z}\sum_{i=0}^{N}h_i(y)(z^{k-i+1}-z^{-k+i-1})
\\
&=\frac{1}{z-1/z}\left[z^{k+1} h(1/z,y) - z^{-k-1}h(z,y)\right],
\end{align*}
and this identity holds for every $k$.

Take $j<k$ and integrate $q_k(x,y)U_j(x)$. 
Substituting $x=\cos\theta$ we get
\begin{align*}
\int_{-1}^{1}q_k(x,y) & U_j(x)\frac{\sqrt{1-x^2}}{|h(z,y)|^2}dx
\\
=& \int_{0}^{\pi} \frac{z^{k+1} h(1/z,y) -
z^{-k-1}h(z,y)}{(z-1/z)^2h(z,y)h(1/z,y)} (z^{j+1}-z^{-j-1})
\sin^2\theta d\theta
\\
=& \int_{0}^{\pi}  \frac{z^{k+1}
(z^{j+1}-z^{-j-1})}{(z-1/z)^2h(z,y)} \sin^2\theta d\theta -
\int_{0}^{\pi}  \frac{z^{-k-1} (z^{j+1}-z^{-j-1})}
{(z-1/z)^2h(1/z,y)}\sin^2\theta d\theta.
\end{align*}
If $\theta$ is replaced by $-\theta$ in the second integral, 
the two integrals can be combined into one integral over $[-\pi,\pi]$. This 
can be rewritten as a contour integral over $|z|=1$, with $z=e^{i\theta}$ as 
follows
\begin{equation}\label{res}
\int_{-\pi}^{\pi}  \frac{z^{k+1} (z^{j+1}-z^{-j-1})}{h(z,y)}
\frac{\sin^2\theta}{(z-1/z)^2} d\theta = \frac{i}{4}\int_{|z|=1}
\frac{z^{k+j+1}-z^{k-j-1}}{h(z,y)}dz.
\end{equation}
Since $h(z,y)$ is a stable polynomial the last
integral vanishes for $k>j$ by the residue theorem.

Finally, let us see that for $k \ge \lceil \frac{N -2}{2} \rceil $ the norm of 
$q_k$ does not depend upon $y$.
First let $N < 2k+2$. Using \eqref{res} we obtain
\begin{align*}
\int_{-1}^{1}q_k(x,y) & q_k(x,y)\frac{\sqrt{1-x^2}}{|h(z,y)|^2}dx
=\int_{-1}^{1}q_k(x,y)h_0(y)U_k(x)\frac{\sqrt{1-x^2}}{|h(z,y)|^2}dx
\\
&= \frac{ih_0(y)}{4}\int_{|z|=1} \frac{z^{2k+1}-z^{-1}}{h(z,y)}dz =
\frac{-ih_0(y)}{4}\int_{|z|=1} \frac{z^{-1}}{h(z,y)}dz
\\
&= \frac{\pi}{2} \frac{h_0(y)}{h(0,y)} = \frac{\pi}{2}.
\end{align*}

For the case  $N = 2k+2$, \eqref{qk1} shows that 
$q_k(x,y) = (h_0 - h_N) U_k(x) + ...$, so that a simple modification of the 
previous computation gives that the norm of $q_k$ is 
$\frac{\pi}{2} (1- h_N(y)/h_0(0,y)) = \frac{\pi}{2} (1- h_N)$. 
\end{proof}

\begin{rem} The condition deg$\;h_i(y)\leq \frac{N}{2} - |\frac{N}{2}-i|$ in 
Lemma~\ref{sbtwo} is sufficient for the class of Bernstein-Szeg\H{o} measures 
considered in the present paper. However, the statement can be easily 
extended to more general weights. For instance, if deg$\,h_i\leq i$ for 
every $i=0,1,\dots,N$, then $q_k(x,y)$ is a polynomial of total degree $k$ in 
$x$ and $y$ for $k\geq N-1$.
\end{rem}

Let $\Pi_n$ denote the space of polynomials of total degree at most $n$
and 
$\Pi_{n,m}=\mbox{span}\{x^i y^j, 0\le i\le n, 0\le j\le m\}$.
Using the above Lemma, we can construct polynomials of total
degree $n$ orthogonal to all polynomials in $\Pi_{n-1}$. 
\begin{thm}\label{th3.2}
Consider the two variable measure
\begin{equation}\label{sbone}
d\mu(x,y)=\frac{4}{\pi^2}\frac{\sqrt{1-x^2}\sqrt{1-y^2}}{|h(z,y)|^2}\,dx\,dy,
\end{equation}
where $h(z,y)$ is as in \eqref{h}. Then, for 
$\lceil \frac{N -2}{2} \rceil\le k\le n$, the polynomial
\begin{equation*}
p_n^k(x,y)=q_{k}(x,y)U_{n-k}(y)
\end{equation*} 
is orthogonal to all polynomials of degree less than $(k,n-k)$.
\end{thm}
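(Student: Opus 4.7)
The plan is to reduce the orthogonality integral to iterated one-variable integrals via the product decomposition $d\mu(x,y)=\tfrac{4}{\pi^2}\,d\mu_y(x)\,\sqrt{1-y^2}\,dy$, where $d\mu_y$ is exactly the Bernstein--Szeg\H{o} measure from Lemma~\ref{sbtwo}. Orthogonality of $p_n^k$ to every polynomial of degree less than $(k,n-k)$ in the total-degree ordering amounts to verifying
$$\int_{-1}^1\!\!\int_{-1}^1 p_n^k(x,y)\,x^iy^j\,d\mu(x,y)=0$$
whenever $i<k$ or $i+j<n$. Setting
$$F(y):=\int_{-1}^1 q_k(x,y)\,x^i\,d\mu_y(x),$$
the full integral factors as $\tfrac{4}{\pi^2}\int_{-1}^1 F(y)\,y^j U_{n-k}(y)\sqrt{1-y^2}\,dy$, and I would analyze the two index ranges separately.

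For $i<k$ the polynomial $x^i$ has degree less than $k$ in $x$, so Lemma~\ref{sbtwo} gives $F(y)\equiv 0$ and the integral vanishes for every $j$. For $i\ge k$ the task is to show that $F$ is a polynomial in $y$ of degree at most $i-k$. I would expand $x^i=\sum_{s=0}^i\alpha_s U_s(x)$ in the Chebyshev $U$-basis and insert this into $F(y)$, applying the contour identity \eqref{res} from the proof of Lemma~\ref{sbtwo}. The $s<k$ terms drop out by residues, while for $s\ge k$ the only pole of the integrand inside $|z|\le 1$ comes from $z^{k-s-1}$ at the origin, so the residue theorem yields $\tfrac{\pi}{2}\,b_{s-k}(y)$, where the coefficients $b_m(y)$ are defined by the Laurent expansion $1/h(z,y)=\sum_{m\ge 0}b_m(y)z^m$. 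Thus $F(y)=\tfrac{\pi}{2}\sum_{s=k}^i\alpha_s\,b_{s-k}(y)$, and the question reduces to a degree bound on the $b_m$.

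The key step is the estimate $\deg_y b_m\le m$. This follows by induction from the recursion $b_m=-\sum_{r=1}^{\min(m,N)}h_r(y)\,b_{m-r}(y)$ with $b_0=1$, once one observes that the hypothesis $\deg_y h_r\le \tfrac{N}{2}-|\tfrac{N}{2}-r|$ implies $\deg_y h_r\le r$ (the bound equals $r$ for $r\le N/2$ and equals $N-r\le r$ for $r\ge N/2$), so that every summand satisfies $\deg h_r+\deg b_{m-r}\le r+(m-r)=m$. Granted this, $\deg_y F\le i-k$, whence $F(y)\,y^j$ has degree at most $i-k+j$. Since $U_{n-k}(y)$ is orthogonal to all polynomials of degree less than $n-k$ against $\sqrt{1-y^2}\,dy$, the outer integral vanishes whenever $i-k+j<n-k$, i.e.\ $i+j<n$. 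Combining with the first case covers exactly the total-degree condition $(i,j)<_{\text{td}}(k,n-k)$.

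The main obstacle is the degree bound $\deg_y b_m\le m$; once it is in place everything else is routine residue calculus together with the one-variable orthogonality already built into Lemma~\ref{sbtwo}. It is worth noting that this bound is precisely where the asymmetric degree profile $\deg h_i\le \tfrac{N}{2}-|\tfrac{N}{2}-i|$ imposed in the hypotheses plays an essential role.
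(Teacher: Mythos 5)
Your proof is correct, and while it shares the paper's outer skeleton --- factor $d\mu$ as $\frac{4}{\pi^2}\,d\mu_y(x)\,\sqrt{1-y^2}\,dy$, integrate in $x$ first via Lemma~\ref{sbtwo}, then finish with Chebyshev orthogonality in $y$ --- your treatment of the case $i\ge k$ is a genuinely different mechanism. The paper never computes the inner integral for $i>k$: it introduces the auxiliary family $q_i(x,y)U_j(y)$, $i\ge k$, kills those inner products by the orthogonality of $q_i$ to polynomials of $x$-degree less than $i$ (noting $q_k$ has $x$-degree $k<i$), handles $i=k$ via the $y$-independence of the norm of $q_k$ with respect to $d\mu_y$, and then closes with a spanning argument expressing every monomial of lower total degree in terms of $\{U_iU_j,\ i<k\}$ and $\{q_iU_j,\ i\ge k\}$. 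You instead evaluate $\int_{-1}^1 q_k(x,y)U_s(x)\,d\mu_y(x)$ for all $s\ge k$ by pushing the residue computation in \eqref{res} one step further, getting $\frac{\pi}{2}b_{s-k}(y)$ where $1/h(z,y)=\sum_{m\ge 0}b_m(y)z^m$, and then prove $\deg_y b_m\le m$ by induction from the division-free recursion $b_m=-\sum_{r\ge1}h_r b_{m-r}$ (division-free because $h_0(y)=1$, which also shows the $b_m$ are polynomials) together with $\deg h_r\le \min(r,N-r)\le r$. This replaces both the auxiliary polynomials and the spanning step, and your degree bound is precisely a generalization of the paper's key constant-norm fact, which is the special case $b_0=1$. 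Two fine points are easily secured: the identity \eqref{res} is derived in the paper with no restriction relating $j$ and $k$ (the hypothesis $j<k$ enters only when concluding that the contour integral vanishes), so invoking it for $s\ge k$ is legitimate; and your target set of monomials, $i<k$ or $i+j<n$, strictly contains $\{(i,j)<_{\rm td}(k,n-k)\}$, so you prove slightly more than required --- in fact your computation simultaneously recovers the lexicographic orthogonality of Lemma~\ref{lem2.2}. What each approach buys: the paper's route is shorter given Lemma~\ref{sbtwo} and avoids explicit Laurent coefficients; yours is more quantitative, shows that the hypothesis $\deg h_i\le \frac{N}{2}-|\frac{N}{2}-i|$ enters the orthogonality only through the weaker bound $\deg h_i\le i$ (the stronger profile, and the constraint $k\ge\lceil\frac{N-2}{2}\rceil$, are needed only to guarantee that $p_n^k$ actually has total degree $n$ with leading monomial $x^ky^{n-k}$, which your write-up should note is supplied by Lemma~\ref{sbtwo}), and avoids the triangularity assertion the paper leaves implicit in its final span decomposition.
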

\begin{proof}
From Lemma~\ref{sbtwo} it follows that $q_{k}(x,y)U_{n-k}(y)$, 
$\lceil \frac{N -2}{2} \rceil \le k\le n$ 
is of degree $(k,n-k)$ in $\Pi_n$. The result follows directly from previous 
lemma and the orthogonality of Chebyshev polynomials. To see this consider
the product $U_i(x)U_j(y)$ if $i<k, i+j\le n $ and $q_i(x,y)U_j(y)$ if $i\ge
k, i+j<n$. Then, integrating first with respect to $x$ and using the previous
lemma yields,
\begin{equation*}
\int_{-1}^1 \int_{-1}^1 p_n^k(x,y) U_i(x) U_j(y) d\mu(x,y) = 0,
\end{equation*}
if $i<k$ and also
\begin{equation*}
\int_{-1}^1 \int_{-1}^1 p_n^k(x,y) q_i(x,y) U_j(y) d\mu(x,y) = 0,
\end{equation*}
for $i>k$. Finally, for $i=k$ we perform the integration with
respect to $x$, and since the norm of $q_k(x,y)$ does not depend on
$y$, the orthogonality of Chebyshev
polynomials shows that for $k+j\le n$,
\begin{equation*}
\int_{-1}^1 \int_{-1}^1 p_n^k(x,y) q_k(x,y) U_j(y) d\mu(x,y) =
\frac{2}{\pi}\int_{-1}^1 U_{n-k}(y) U_j(y) \sqrt{1-y^2}\,dy=\delta_{n-k,j}.
\end{equation*}
The result follows since if $x^i y^j\in \Pi_n$ is a monomial of degree less 
than $(k,n-k)$ then 
\begin{align*}
&x^i y^j\in\mbox{span}\{U_i(x) U_j(y), (i,j)<_{\rm td} (k,n-k),\ i<k\}\\
&\qquad\qquad
\oplus\mbox{span}\{q_i(x,y) U_j(y),(i,j)<_{\rm td}(k,n-k),\ i\ge k\}.
\end{align*}
\end{proof}

In a similar way, if 
\begin{equation*}
\tilde{h}(x,w)=\sum_{j=0}^{M}\tilde{h}_j(x)w^j,
\end{equation*}
is a stable polynomial in $w$ for any $-1\le x\le1$ where $\tilde h_j(x)$ are 
polynomials with real coefficients such that the
deg$\,\tilde h_j(x)\le \frac{M}{2}-|\frac{M}{2}-j|$ 
and $\tilde h_0(x)=1$, we have: 
\begin{lem}\label{sbthree} For any $l$ such that 
$\lceil \frac{M -2}{2} \rceil\le l$ the
polynomial
\begin{equation*}
\tilde{q}_l(x,y)=\sum_{j=0}^{M} \tilde{h}_j(x)U_{k-j}(y),
\end{equation*}
is orthogonal to any polynomial in $y$ of degree less than $l$ with
respect to
\begin{equation*}
d\mu_x(y)=\frac{\sqrt{1-y^2}}{|\tilde{h}(x,w)|^2}dy.
\end{equation*}
Moreover the norm of $\tilde q_l$ is independent of $x$.
\end{lem}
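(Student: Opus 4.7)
The plan is to observe that Lemma~\ref{sbthree} is the mirror image of Lemma~\ref{sbtwo} under the swap $(x,z,h,N,k)\leftrightarrow (y,w,\tilde h,M,l)$, and the proof given there transfers verbatim once the change of variable is made. I would first write $\tilde q_l(x,y)$ in closed form by inserting $U_n(y)=(w^{n+1}-w^{-n-1})/(w-1/w)$ with $y=(w+1/w)/2$, obtaining
$$
\tilde q_l(x,y)=\frac{1}{w-1/w}\left[w^{l+1}\tilde h(x,1/w)-w^{-l-1}\tilde h(x,w)\right].
$$
This is the analogue of the representation used in the proof of Lemma~\ref{sbtwo}, with the variable of integration now being $y$ rather than $x$.

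Next, for any $0\le j<l$, substitute $y=\cos\phi$ and integrate $\tilde q_l(x,y)U_j(y)$ against $d\mu_x(y)$. The two pieces coming from the two terms above combine, after sending $\phi\mapsto -\phi$ in one of them, into the single contour integral
$$
\frac{i}{4}\int_{|w|=1}\frac{w^{l+j+1}-w^{l-j-1}}{\tilde h(x,w)}\,dw,
$$
which vanishes by the residue theorem: for each fixed $x\in[-1,1]$ the stability hypothesis gives $\tilde h(x,w)\neq 0$ for $|w|\le 1$, so the integrand is holomorphic in $\{|w|\le 1\}$ except for a pole at $w=0$, and $l+j+1\ge 1$ while $l-j-1\ge 0$ so no nonzero residue appears. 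This proves the orthogonality assertion.

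For the norm, the same reduction shows that all terms except the $U_l(y)$ contribution are killed by orthogonality, so
$$
\int_{-1}^{1}\tilde q_l(x,y)^2\,d\mu_x(y)=\tilde h_0(x)\cdot\frac{i}{4}\int_{|w|=1}\frac{w^{2l+1}-w^{-1}}{\tilde h(x,w)}\,dw=\frac{\pi}{2}\cdot\frac{\tilde h_0(x)}{\tilde h(x,0)}=\frac{\pi}{2},
$$
using $\tilde h(x,0)=\tilde h_0(x)=1$; hence the norm is independent of $x$ (with the standard modification $\frac{\pi}{2}(1-\tilde h_M)$ in the boundary case $M=2l+2$). Nothing in the argument is genuinely new: the only point to verify is that the degree hypothesis $\deg\tilde h_j\le \frac{M}{2}-|\frac{M}{2}-j|$ together with $l\ge \lceil (M-2)/2\rceil$ plays exactly the same role as the corresponding hypothesis on $h_i$ did in Lemma~\ref{sbtwo}, and this is immediate by symmetry, so no real obstacle arises.
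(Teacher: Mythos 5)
Your proposal is correct and is exactly the argument the paper intends: Lemma~\ref{sbthree} is stated without proof as the mirror image of Lemma~\ref{sbtwo}, and your transferred Szeg\H{o}-style contour-integral computation (orthogonality via the residue theorem using stability of $\tilde h(x,w)$ in $|w|\le 1$, then the norm evaluation $\frac{\pi}{2}\,\tilde h_0(x)/\tilde h(x,0)=\frac{\pi}{2}$, with the $\frac{\pi}{2}(1-\tilde h_M)$ modification when $M=2l+2$, where $\tilde h_M$ is constant by the degree hypothesis) reproduces that proof verbatim under the swap $(x,z,h,N,k)\leftrightarrow(y,w,\tilde h,M,l)$. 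You also implicitly correct the paper's typo $U_{k-j}(y)$ (which should read $U_{l-j}(y)$) by working with $l$ throughout.
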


Analogous results hold if we order the monomials in the lexicographical or 
reverse lexicographical orderings. Let $\Pi_{n,m}$ be given as above and order
the monomials according to the lexicographical ordering. Let 
$\kappa=\max{\rm degree}(h_i(y))$ and  
$q_r$ be given by equation~\eqref{qk}. Then we find
\begin{lem}\label{lem2.2} For 
$\lceil \frac{N -1}{2} \rceil\le r\le n$ and $0\le k\le m-\kappa$, 
$$p^k_{r,m}= q_r(x,y)U_k(y)$$ is of norm one, of lexicographical
degree $(r,k)$, and is orthogonal with respect to the measure \eqref{sbone} 
to all the monomials in $\Pi_{n,m}$ of lexicographical degree less than 
$(r,k)$. When $N$ is even and $r=N/2-1$ $p^k_{r,m}$ is given by the above
formula up to a multiple.
\end{lem}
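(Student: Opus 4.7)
The plan is to follow the template of Theorem~\ref{th3.2}: integrate first in $x$ using the $y$-by-$y$ orthogonality of $q_r$ supplied by Lemma~\ref{sbtwo}, then integrate in $y$ against the Chebyshev weight, exploiting the crucial fact that the inner $x$-integral turns out to be a constant in $y$ that can be pulled through the outer integral.

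First I would pin down the lex-leading term. Because $h_0(y)\equiv 1$, the coefficient of $x^r$ in $q_r(x,y)$ is the leading coefficient $2^r$ of $U_r(x)$, so $p^k_{r,m}=2^{r+k}x^ry^k+(\text{lex-lower terms})$. The hypothesis $k\le m-\kappa$ ensures that the $y$-degree $\kappa+k$ of the product does not exceed $m$, so $p^k_{r,m}\in\Pi_{n,m}$ and its lex degree is exactly $(r,k)$.

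Next I would verify orthogonality by splitting the monomials $x^iy^j$ with $(i,j)<_{\mathrm{lex}}(r,k)$ into two cases. When $i<r$, Lemma~\ref{sbtwo} gives $\int x^iq_r(x,y)\,d\mu_y(x)=0$ for every $y$, so Fubini kills the pairing immediately. When $i=r$ and $j<k$, I would decompose $q_r(x,y)=2^rx^r+(\text{lower degree in }x)$ and use orthogonality of $q_r$ against the lower-degree piece to obtain
\begin{equation*}
\int x^rq_r(x,y)\,d\mu_y(x)=\frac{1}{2^r}\int q_r(x,y)^2\,d\mu_y(x)=\frac{\pi}{2^{r+1}},
\end{equation*}
which is independent of $y$. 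Feeding this constant into the remaining $y$-integral reduces the pairing to a multiple of $\int_{-1}^{1}y^jU_k(y)\sqrt{1-y^2}\,dy$, which vanishes for $j<k$ by the orthogonality of the Chebyshev polynomials. The unit-norm statement is the same computation with $j=k$: the double integral factors as $(4/\pi^2)(\pi/2)(\pi/2)=1$.

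The one step requiring care is the exceptional case $N=2r+2$, i.e.\ $r=N/2-1$ with $N$ even, which sits below the regime $r\ge\lceil(N-1)/2\rceil$ governed by the first norm formula of Lemma~\ref{sbtwo}. Here the norm of $q_r$ in $L^2(d\mu_y)$ is $(\pi/2)(1-h_N(y))$, and the key observation is that the degree bound $\deg h_N\le N/2-|N/2-N|=0$ forces $h_N$ to be a constant. Consequently $\|q_r\|_{L^2(d\mu_y)}^2$ remains $y$-independent, the orthogonality argument above goes through verbatim, but the $L^2(d\mu)$-norm of $p^k_{r,m}$ now comes out to $\sqrt{1-h_N}$ rather than $1$, which is precisely the ``up to a multiple'' caveat in the statement.
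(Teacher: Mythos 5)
Your proof is correct and takes essentially the same route as the paper's: membership in $\Pi_{n,m}$ and the lex-leading term $x^ry^k$ from the degree constraints on the $h_i(y)$, the case $i<r$ from Lemma~\ref{sbtwo} via Fubini, and the case $i=r$, $j<k$ from the $y$-independence of $\int x^r q_r\,d\mu_y(x)$ combined with the orthogonality of $U_k(y)\sqrt{1-y^2}$ to $y^j$, $j<k$. Your treatment of the exceptional case $N$ even, $r=N/2-1$ --- observing that $\deg h_N\le \frac{N}{2}-|\frac{N}{2}-N|=0$ forces $h_N$ to be constant, so the norm $\frac{\pi}{2}(1-h_N)$ remains $y$-independent and $q_rU_k$ has norm $\sqrt{1-h_N}$, explaining the ``up to a multiple'' caveat --- is a correct elaboration of a detail the paper's one-line proof leaves implicit.
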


\begin{proof} From the definition of $q_r(x,y)$ and the constraints on the 
degree of $h_i(y)$ we see that $q_r(x,y)U_k(y)\in \Pi_{n,m}$ for 
$k\le m-\kappa$ and is of lexicographical
degree $x^ry^k$.  The orthogonality of $q_r(x,y)U_k(y)$ now follows from
Lemma~\ref{sbtwo} and the orthogonality of $U_k(y)\sqrt{1-y^2}$ to $y^j$, 
$0\le j <k$.
\end{proof}

The above Lemma shows the remarkable fact that for the weights we are 
considering if we increase $m$, $p^k_{r,m}$ does not need to be recomputed.

Likewise with $\gamma=\max{\rm degree}(\tilde h_j(x))$ we have
\begin{lem}\label{lem2.3}
Let
$$\tilde p^l_{n,t} = \tilde q_t(x,y)U_l (x).$$
Then for $\lceil \frac{M-1}{2}\rceil\le t\le m$ and 
$0\le l\le n-\gamma$, $\tilde p^l_{n,t}$ is of norm one, of reverse 
lexicographical degree $(l,t)$ and is orthogonal with respect to the
measure \eqref{sbone} to all the monomials in 
$\Pi_{n,m}$ of reverse lexicographical degree less than $(l, t)$. When $M$
is even and $t=M/2-1$ $\tilde p^l_{n,t}$ is given by the above
formula up to a multiple.
\end{lem}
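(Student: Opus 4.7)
The proof will mirror Lemma~\ref{lem2.2} under the exchange $x\leftrightarrow y$, using Lemma~\ref{sbthree} in place of Lemma~\ref{sbtwo}. The silent hypothesis needed is that the weight $\rho$ admits the two factorizations $|h(z,y)|^2=|\tilde h(x,w)|^2=\rho(x,y)$, which lets us rewrite the measure \eqref{sbone} as $\frac{4}{\pi^2}\frac{\sqrt{1-x^2}\sqrt{1-y^2}}{|\tilde h(x,w)|^2}\,dx\,dy$ for the purpose of integrating in $y$ first.

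I would start by verifying that $\tilde p^l_{n,t}=\tilde q_t(x,y)U_l(x)$ belongs to $\Pi_{n,m}$ and identifying its reverse lex leading term. Since every $\tilde h_j$ has $x$-degree at most $\gamma$, the $x$-degree of $\tilde p^l_{n,t}$ is at most $l+\gamma \le n$. A computation mirroring the first paragraph of the proof of Lemma~\ref{sbtwo} (use $U_{-j-2}=-U_j$ together with the degree bound on $\tilde h_j$) shows that the $y$-degree of $\tilde q_t$ is exactly $t$ when $t\ge \lceil (M-2)/2\rceil$, with leading $y^t$ coefficient $\tilde h_0(x)=1$. Thus the monomial $x^l y^t$ appears in $\tilde p^l_{n,t}$ with a positive coefficient and every other monomial is reverse-lex smaller.

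For orthogonality against $x^iy^j$ with $(i,j)<_{\rm revlex}(l,t)$, I would integrate in $y$ first:
$$
\int \tilde p^l_{n,t}\, x^i y^j\, d\mu(x,y) = \frac{4}{\pi^2}\int_{-1}^1 x^i U_l(x)\sqrt{1-x^2}\left[\int_{-1}^1 \tilde q_t(x,y)\,y^j\, d\mu_x(y)\right]dx.
$$
If $j<t$ the bracketed integral vanishes by Lemma~\ref{sbthree}. If $j=t$ and $i<l$, expanding $y^t$ in the Chebyshev basis $\{U_k(y)\}$ and using orthogonality of $\tilde q_t$ to $U_0,\dots,U_{t-1}$ reduces the bracket to a constant multiple of $\langle \tilde q_t,\tilde q_t\rangle_{\mu_x}$, which by Lemma~\ref{sbthree} does not depend on $x$; the outer integral then vanishes by orthogonality of $U_l$ against $x^i\sqrt{1-x^2}$. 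The identical factorization, now with the inner $y$-norm equal to $\pi/2$ and $\int_{-1}^1 U_l^2\sqrt{1-x^2}\,dx=\pi/2$, yields $\|\tilde p^l_{n,t}\|=1$.

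The special case $M$ even, $t=M/2-1$ requires only a rescaling. Carrying out the norm computation in the $2k+2=N$ branch of Lemma~\ref{sbtwo} for $\tilde h$ gives the $y$-norm of $\tilde q_t$ equal to $\frac{\pi}{2}(1-\tilde h_M)$; the degree constraint forces $\tilde h_M$ to be a constant, so this quantity is still independent of $x$. Hence $\tilde q_t(x,y)U_l(x)$ still satisfies the orthogonality and leading-term statements but has squared norm $1-\tilde h_M$, and multiplying by $(1-\tilde h_M)^{-1/2}$ produces the orthonormal polynomial. The one technical point one must not lose sight of is that every inner $y$-integral appearing in the argument be constant in $x$; Lemma~\ref{sbthree} is designed precisely to guarantee this, so no real obstacle arises beyond bookkeeping.
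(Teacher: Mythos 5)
Your proposal is correct and is essentially the paper's own argument: the paper omits the proof of Lemma~\ref{lem2.3} as the mirror image (``Likewise\dots'') of Lemma~\ref{lem2.2}, whose proof combines the leading-term/degree check with Fubini-style integration using Lemma~\ref{sbtwo} in one variable and Chebyshev orthogonality in the other --- exactly what you did with Lemma~\ref{sbthree} and $U_l(x)$, after correctly flagging the silent hypothesis $|h(z,y)|^2=|\tilde h(x,w)|^2$. Your treatment of the borderline case ($M$ even, $t=M/2-1$, squared norm $1-\tilde h_M$ with $\tilde h_M$ constant by the degree bound) matches the $2k+2=N$ branch of Lemma~\ref{sbtwo} and justifies the ``up to a multiple'' clause.
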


At this point without any other assumptions on $\sigma$ in both orderings 
there are an infinite number of missing polynomials. More precisely in the 
total degree ordering we are unable to compute $p^k_n$ for  
$k<\lceil \frac{N -2}{2} \rceil$ while in the lexicographical ordering 
$p^k_{r,m}$ is not given for $r< \lceil \frac{N -2}{2} \rceil$ or 
$k> m-\kappa$. We now consider a class of weights where we can compute 
$p^k_{r,m}$ for $k>m-\kappa$ .

Let
\begin{equation}\label{sigma1}
\sigma (x,y)=\frac{4}{\pi^2}\frac{\sqrt{1-x^2}\sqrt{1-y^2}}{|h(z,y)|^2} 
= \frac{4}{\pi^2}\frac{\sqrt{1-x^2}\sqrt{1-y^2}}{|\tilde h(x,w)|^2}\,,
\end{equation}
where $h(z,y)=\omega(z,w)\omega(z,1/w)$, with
\begin{equation}\label{ome}
\omega(z,w)=\prod^N_{i=1}(1+a_izw),
\end{equation}
$x=\frac12 (z+1/z)$ and $y=\frac12 (w+1/w)$.  Here we assume $0<|a_i|<1$, 
$a_i\in \RR$.
Thus $\omega(z,w)$ is a stable polynomial, i.e. $\omega(z,w)\ne 0$, 
$|z|\le 1$, $|w|\le 1$. Since for $|z|=1=|w|$ we have 
$|h(z,y)|^2=\prod^N_{i=1}(1+a_izw)(1+a_i z/w)(1+a_iw/z)(1+a_i/(zw))$ 
we see that
\begin{equation}\label{th}
\tilde h(x,w)=\omega(z,w)\omega(1/z,w).
\end{equation}
We begin with

\begin{lem}\label{lem2.1}
The polynomial $w^N\omega(z,1/w)$ is homogeneous in $z$ and $w$ of degree $N$.
With $h(z,y)=\sum^{2N}_{i=0} h_i(y)z^i$ then $\mathrm{deg }\; h_i\le N-|N-i|$ 
and $h(z,y)$ is stable for $-1\le y\le 1$, $|z|\le 1$. The same properties 
hold for $\tilde h $ with the roles of $z$ and $w$ and $x$ and $y$ 
interchanged respectively.
\end{lem}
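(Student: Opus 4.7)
The plan is to prove the three claims in order by direct computation using the explicit factorization of $\omega$, and then read off the properties of $\tilde h$ from the symmetry between the definitions \eqref{ome} and \eqref{th}.

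First, I would deal with homogeneity. Writing
$$
w^N\omega(z,1/w)=w^N\prod_{i=1}^N\bigl(1+a_iz/w\bigr)=\prod_{i=1}^N\bigl(w+a_iz\bigr),
$$
each factor is linear (hence homogeneous of degree $1$) in $z$ and $w$, so the product is homogeneous of degree $N$.

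Next I would extract the coefficients $h_i(y)$. Let $e_k=e_k(a_1,\dots,a_N)$ be the $k$-th elementary symmetric polynomial so that $\omega(z,w)=\sum_{k=0}^N e_kz^kw^k$. Multiplying $\omega(z,w)$ by $\omega(z,1/w)=\sum_{l=0}^N e_lz^lw^{-l}$ and collecting the coefficient of $z^i$ gives
$$
h_i(y)=\sum_{\substack{k+l=i\\0\le k,l\le N}} e_ke_l\,w^{k-l}.
$$
This is a Laurent polynomial in $w$ that is symmetric under $w\leftrightarrow 1/w$ (the substitution $k\leftrightarrow l$ permutes the summands), and the exponents $k-l$ lie in $[-(N-|N-i|),\,N-|N-i|]$ for $0\le i\le 2N$. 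Since $y=(w+1/w)/2$, any symmetric Laurent polynomial in $w$ whose highest power of $w$ is $d$ is a polynomial in $y$ of degree at most $d$. This delivers the bound $\deg h_i\le N-|N-i|$.

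For stability, I would observe that for $y\in[-1,1]$ the equation $y=(w+1/w)/2$ is solved by $w=e^{i\phi}$ with $\phi\in[0,\pi]$, so $|w|=1$. For $|z|\le 1$ and $|w|=1$,
$$
|a_izw|=|a_i||z|\le|a_i|<1,\qquad |a_iz/w|=|a_i||z|<1,
$$
so none of the linear factors in $h(z,y)=\prod_{i=1}^N(1+a_izw)(1+a_iz/w)$ can vanish; hence $h(z,y)\ne0$ on the claimed region.

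Finally, the corresponding statements for $\tilde h(x,w)=\omega(z,w)\omega(1/z,w)$ follow by the obvious symmetry of \eqref{ome}--\eqref{th} under the swap $z\leftrightarrow w$ (and correspondingly $x\leftrightarrow y$): the same three computations go through verbatim. The only mild bookkeeping point---and the sole place where one has to be attentive---is keeping the distinction between the degree in the Laurent variable $w$ and the resulting degree in the Chebyshev-type variable $y$; beyond that the proof is routine expansion.
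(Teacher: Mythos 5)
Your proof is correct, and while the trivial parts (homogeneity of $\prod_{i=1}^N(w+a_iz)$, and stability via $|a_izw^{\pm 1}|\le|a_i|<1$ for $|z|\le 1$, $|w|=1$) coincide with the paper's, the heart of the lemma---the bound $\deg h_i\le N-|N-i|$---goes by a genuinely different route. The paper pairs each conjugate couple of linear factors into the quadratic $g_i(z)=(1+a_izw)(1+a_iz/w)=1+2a_iyz+a_i^2z^2$, notes that its $z^j$-coefficient has degree $1-|1-j|$ in $y$ and that each $g_i$ is stable, and then obtains both stability and the degree profile of $h=\prod_{i=1}^N g_i$ multiplicatively; the profile $N-|N-i|$ arises from the triangle inequality $\sum_k|1-j_k|\ge|N-i|$ when $\sum_k j_k=i$, a convolution step the paper leaves implicit. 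You instead expand globally: $\omega(z,w)=\sum_k e_kz^kw^k$ gives the explicit formula $h_i=\sum_{k+l=i}e_ke_l\,w^{k-l}$, and its invariance under $w\leftrightarrow 1/w$ together with the exponent range $|k-l|\le N-|N-i|$ and the identity $w^j+w^{-j}=2T_j(y)$ yields the bound. Your version buys explicit coefficients in terms of elementary symmetric polynomials (potentially useful for the $\gamma$-computations in Theorem~\ref{thm2.5} and for the examples in Section~5) and makes fully visible \emph{why} each $h_i$ is a polynomial in $y$ at all, which the paper's argument absorbs into the factorization; the paper's version is shorter and exhibits $h$ as a product of $N$ copies of the $N=1$ building block $1-2ayz+a^2z^2$ of Example~1. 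The final symmetry step is the same in both: $z\leftrightarrow w$ fixes $\omega$ and carries $h$ to $\tilde h$ via \eqref{th}, so all three properties transfer verbatim.
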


\begin{proof} The homogeneity of $w^N\omega(z,1/w)$ follows from the 
definition of $\omega(z,w)$.  Since $(1+a_izw)$ is stable 
$g_i(z)=(1+a_izw)(1+a_iz/w)=(1+2a_iyz+a^2_iz^2)$ is stable for 
$-1\le y\le 1$ and $|z|\le 1$.  Furthermore with
$g_i(z)=\sum^2_{j=0} g_{i,j}(y)z^j$ we see that $\mbox{deg } g_{i,j}=1-|1-j|$. 
The degree properties of $h_i(y)$ now follow since 
$h(z,y)=\prod^N_{i=1}g_i(z,y)$. An analogous argument holds for $\tilde h$.
\end{proof}

Lemmas~\ref{lem2.2} and \ref{lem2.3} allow us to compute $p^k_{r,m}$ and 
$\tilde p^l_{n,t}$ for $r$ and $t$ sufficiently large and $k\le m-N$ and
$l\le n-N$ respectively.  We now embark on the construction of $p^k_{r,m}$ and
$\tilde p^l_{n,t}$ for $m-N < k\le m$ and $n-N <l \le n$ respectively.
In order to accomplish this, let 
$K=\mbox{span} \{U_n(x)U_m(y), n\ge 0, m\ge 0\}$ i.e. the space of all 
polynomials in $x$ and $y$, and
$\hat K=\mbox{span}\{z^{-n}w^{-m}, n\ge 0, m\ge 0\}$.  We define a linear map
$T:K\to \hat K$ by $T(U_l (x)U_j (y))=z^{-l}w^{-j}$ where $y=1/2(w+1/w)$ and
$x=1/2(z+1/z)$.  It is not difficult to check that $T$ is a linear one to one 
mapping from $K$ onto $\hat K$.  We begin with the simple 

\begin{lem}\label{lem2.4}  
For fixed $n$ and $m$
let $p(x)$ be a polynomial of degree at most $n$ in $x$ and $q(y)$ be a 
polynomial of degree
at most $m$ in $y$.  Then $T(p(x)q(y)U_n(x)U_m(y))=p\left(\frac{z+1/z}2\right)
q\left(\frac{w+1/w}2\right)z^{-n}w^{-m}$.
\end{lem}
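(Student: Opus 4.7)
By linearity of $T$, I would reduce the statement to the case of monomials $p(x) = x^i$ with $0 \le i \le n$ and $q(y) = y^j$ with $0 \le j \le m$. The product $(x^i U_n(x))(y^j U_m(y))$ has a Chebyshev bi-expansion $\sum_{a,b} c_a d_b U_a(x) U_b(y)$ which factors as the product of the two one-variable expansions $x^i U_n(x) = \sum_a c_a U_a(x)$ and $y^j U_m(y) = \sum_b d_b U_b(y)$. Applying $T$ then yields $\left(\sum_a c_a z^{-a}\right)\left(\sum_b d_b w^{-b}\right)$, so the claim reduces to the one-variable identity
$$ \sum_a c_a z^{-a} = \left(\frac{z+1/z}{2}\right)^i z^{-n} \qquad (0 \le i \le n), $$
together with its analogue in $w$.

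I would prove the one-variable statement by induction on $i$. The base case $i=0$ is the definition of $T$. For the inductive step, write $x^i U_n = \sum_{k \ge 0} c_k U_k$ and apply the three-term recurrence $2 x U_k = U_{k+1} + U_{k-1}$, valid for $k \ge 1$. Under $T$, this recurrence intertwines multiplication by $x$ on the polynomial side with multiplication by $(z+1/z)/2$ on the Laurent-monomial side, provided the $k=0$ term does not interfere. The key observation is that
$$ c_0 = \frac{2}{\pi}\int_{-1}^{1} x^i U_n(x) \sqrt{1-x^2}\, dx = 0 \qquad \text{whenever } i < n, $$
by orthogonality of $U_n$ to polynomials of degree less than $n$. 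This allows the induction to pass from $i$ to $i+1$ for every $i \le n-1$, covering the whole range $0 \le i \le n$.

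The main technical point is precisely this boundary behavior at $k = 0$: while $2 x U_k = U_{k+1} + U_{k-1}$ for $k \ge 1$, the exceptional relation $x U_0 = U_1/2$ (rather than the ``symmetric'' $(U_1 + U_{-1})/2$) would contribute an unwanted $z/2$ discrepancy in $T(x \cdot x^i U_n)$ if $c_0 \ne 0$. The degree hypotheses $\deg p \le n$ and $\deg q \le m$ are exactly what forces the relevant $c_0$'s to vanish throughout the induction, so this obstruction never arises. Once the one-variable identity is established, reassembling it with the $y$-variable counterpart via the factorization of the bi-expansion gives the full claim.
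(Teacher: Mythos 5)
Your proof is correct, and its engine is the same as the paper's: the three-term recurrence $2tU_k(t)=U_{k+1}(t)+U_{k-1}(t)$ makes multiplication by $x$ on $K$ intertwine, under $T$, with multiplication by $\frac{1}{2}(z+1/z)$ on $\hat K$, and the two variables separate by bilinearity. Where you genuinely differ is in how the degree hypothesis is deployed. The paper argues in one shot: it expands $p(x)U_n(x)=\sum_{j=-p}^{p}a_jU_{n-j}$ with $p=\deg p\le n$, observes that every index $n-j\ge n-p\ge 0$ is nonnegative --- so the reflection convention $U_{-k-2}=-U_k$ is never triggered --- and then applies $T$ termwise to get $z^{-n}p\left(\frac{z+1/z}{2}\right)$ immediately. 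You instead induct one factor of $x$ at a time and isolate the unique place the intertwining can break, namely $xU_0=U_1/2$ (equivalently: $\hat K$ contains no positive powers of $z$, so $T(xU_0)=z^{-1}/2$ while $\frac{1}{2}(z+1/z)T(U_0)=\frac{1}{2}(z+z^{-1})$, a discrepancy of $z/2$), and you kill this obstruction with the orthogonality computation $c_0=\frac{2}{\pi}\int_{-1}^1 x^iU_n(x)\sqrt{1-x^2}\,dx=0$ for $i<n$. The two justifications are equivalent in content: your vanishing of $c_0$ is exactly the statement that the index $0$ is absent from the support $\{n-i,\dots,n+i\}$ of the Chebyshev expansion of $x^iU_n$, which the paper's index count yields directly and for all indices at once. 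Your route costs an extra induction and an analytic fact where a purely combinatorial support count suffices, but it buys a sharper diagnosis: it pinpoints exactly where $\deg p\le n$ enters and why the lemma fails beyond that range, since for $i\ge n$ some $c_0\ne 0$ and the stray $z/2$ term really does appear.
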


\begin{proof} Suppose $\mbox{deg }p(x)=p$ and $\mbox{deg } q(y)=q$.  Then from 
the recurrence formula
for Chebyshev polynomials i.e., $U_{k+1}(t)+U_{k-1}(t) =2tU_k(t)$ we see that
$p(x)U_n(x)=\sum^p_{j=-p} a_jU_{n-j}$.  From the restrictions on the degree 
of $p$ it follows that the indexes of the Chebyshev polynomials in the above 
sum are positive so $T$ can be applied to obtain 
$T(p(x)U_n(x))=\sum^p_{j=-p}a_jz^{-n+j}=z^{-n}p\left(\frac{z+1/z}2\right)$.  
Since the same is true for $q(y)U_m(y)$ the result follows from the 
linearity of $T$.
\end{proof}

The purpose of the map $T$ is that it conveniently keeps track of the highest 
powers of $x$ and $y$ in terms of $z$ and $w$.

We now prove.

\begin{thm}\label{thm2.5} 
Let $p^k_{r,m} (x,y)$, $m-N<k\le m$, with $m, r\ge 2N$
be of norm one,  of lexicographical degree $(r,k)$ and orthogonal to all 
monomials in $\Pi_{n,m}$ of lexicographical degree less than $(r,k)$. Then 
there exist constants
$a_{m-k,j}$, $b_{m-k,j}$, $j=0,\dots, k-(m-N)-1$ such that
\begin{align*}
&p^k_{r,m} (x,y)\\&=\sum^{k-(m-N)-1}_{j=0} \left\{a_{m-k,j} q_{r+j}(x,y)
U_{k-j}(y)+b_{m-k,j}\tilde q_{m+1+j}(x,y)
U_{r+k-m-1-j}(x)\right\}
\end{align*}
with $a_{m-k,0} \ne 0$.
\end{thm}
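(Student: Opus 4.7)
The approach rests on the uniqueness of $p^k_{r,m}$: since $p^k_{r,m}$ is the only (up to sign) unit-norm polynomial in $\Pi_{r,m}$ of lex degree exactly $(r,k)$ orthogonal to every strictly lex-lower monomial, it suffices to show that a combination of the $2s$ generators (with $s=k+N-m$) has these three properties, and that the forced coefficient $a_{m-k,0}$ is nonzero.

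\emph{Orthogonality of each generator.} For $q_{r+j}(x,y)U_{k-j}(y)$ I integrate in $x$ first and apply Lemma~\ref{sbtwo}: the $x$-integral vanishes for every $i<r+j$, and the only residual case is $j=0$, $i=r$, where the $x$-integral yields the constant $\pi/2$ and the remaining $y$-integral annihilates $y^l$ for $l<k$ by Chebyshev orthogonality. For $\tilde{q}_{m+1+j}(x,y)U_{r+k-m-1-j}(x)$ I symmetrically integrate in $y$ first and invoke Lemma~\ref{sbthree}: since $l\le m<m+1+j$ for every lex-lower monomial, the $y$-integral vanishes outright. Therefore every linear combination of the $2s$ generators is orthogonal to all lex-lower monomials in $\Pi_{r,m}$.

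\emph{Translating the containment condition via the $T$-map.} I apply the map $T$ of Lemma~\ref{lem2.4}, which encodes the bidegree. Using the expansion $q_{r+j}=\sum_{i=0}^{2N}h_i(y)U_{r+j-i}(x)$, the factorizations $h(z,y)=\omega(z,w)\omega(z,1/w)$ and $\tilde{h}(x,w)=\omega(z,w)\omega(1/z,w)$ from \eqref{sigma1}--\eqref{th}, and the hypothesis $r,m\ge 2N$ (which keeps all Chebyshev indices non-negative so Lemma~\ref{lem2.4} applies directly), I obtain
\begin{align*}
T\bigl(q_{r+j}U_{k-j}\bigr) &= z^{-(r+j)}w^{-(k-j)}\,\omega(z,w)\,\omega(z,1/w),\\
T\bigl(\tilde{q}_{m+1+j}U_{r+k-m-1-j}\bigr) &= z^{-(r+k-m-1-j)}w^{-(m+1+j)}\,\omega(z,w)\,\omega(1/z,w).
\end{align*}
The condition ``the combination lies in $\Pi_{r,m}$ with lex degree $\le(r,k)$'' becomes: the $T$-image is supported in the rectangle $\{(-i,-l):0\le i\le r,\,0\le l\le m\}$ (of $z^{-i}w^{-l}$ exponents). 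The upper-lex restriction $l\le k$ at $i=r$ is automatic since all these $T$-images live on the anti-diagonals $i+l\in\{r+k-2N,\ldots,r+k\}$ and therefore never reach the excluded corner $i=r$, $l>k$.

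\emph{Main obstacle.} The crux is to show the resulting cancellation system has exactly a one-dimensional solution space, with $a_{m-k,0}\ne 0$ on it. I would factor out the common $\omega(z,w)$ and decouple the problem along the $N+1$ parallel anti-diagonals indexed by the subset size $|S'|\in[0,N]$ coming from $\omega(z,w)$. On each anti-diagonal the $s$ translates of $\omega(z,1/w)$ and the $s$ translates of $\omega(1/z,w)$ interact by a shift pattern, and the cancellation of the bad monomials ($i>r$ or $l>m$) reduces to a linear system whose non-degeneracy follows from the nonvanishing of sub-determinants built from elementary-symmetric functions of the nonzero parameters $a_1,\ldots,a_N$. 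A dimension count then shows the system has corank exactly one, pinning down the combination up to a scalar, which after normalization matches $p^k_{r,m}$. The nonvanishing of $a_{m-k,0}$ follows because if $a_{m-k,0}=0$ the combination would still lie in $\Pi_{r,m}$ and be orthogonal to every lex-lower monomial, hence would be a scalar multiple of $p^k_{r,m}$; tracking the corner coefficient $z^{-r}w^{-k}$ of the $T$-image (which in $T(q_{r+j}U_{k-j})$ equals the elementary symmetric $e_j$, and in $T(\tilde{q}_{m+1+j}U_{r+k-m-1-j})$ equals $e_{m+1+j-k}$) shows that the remaining $(a_{m-k,j})_{j\ge1}$ and $(b_{m-k,j})$ cannot simultaneously satisfy all the cancellation conditions and produce the nonzero leading coefficient of $p^k_{r,m}$, contradicting its nontriviality.
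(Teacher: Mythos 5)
Your setup is sound and in fact coincides with the paper's framework: the orthogonality of each generator via Lemmas~\ref{sbtwo} and \ref{sbthree} is argued correctly, the computed $T$-images $T(q_{r+j}U_{k-j})=z^{-(r+j)}w^{-(k-j)}\omega(z,w)\omega(z,1/w)$ and $T(\tilde q_{m+1+j}U_{r+k-m-1-j})=z^{-(r+k-m-1-j)}w^{-(m+1+j)}\omega(z,w)\omega(1/z,w)$ are exactly right, and your observation that all images live on the anti-diagonals $i+l\in\{r+k-2N,\dots,r+k\}$ (so the excluded corner $i=r$, $l>k$ is automatically avoided) is correct and consistent with the paper. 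But the crux of the theorem is precisely the step you label ``Main obstacle,'' and there your argument is not a proof: you assert that the cancellation system is non-degenerate because of ``the nonvanishing of sub-determinants built from elementary-symmetric functions of the nonzero parameters $a_1,\dots,a_N$,'' and that ``a dimension count then shows the system has corank exactly one.'' Neither claim is established, and the determinant claim is not automatic: matrices built from elementary symmetric functions can be singular for special real values of the $a_i$, so without an explicit identification of which minors arise and why they cannot vanish here, the solvability of the system — i.e.\ the entire existence statement — remains open. Your corank-one count is likewise unproven (and is not actually needed: uniqueness of $p^k_{r,m}$ up to sign follows from Gram--Schmidt, not from the linear system).

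The paper closes exactly this gap without any determinants, by a telescoping induction that you should compare with your anti-diagonal picture. After factoring out $\omega(z,w)$, the residual factor at stage $j$ is a \emph{homogeneous} polynomial $\omega^j(z,w)$ of degree $N-j$, paired with its reflection $\tilde\omega^j(z,w)=z^{N-j}w^{N-j}\omega^j(1/z,1/w)$ coming from the reverse-lex generator. Homogeneity means that at each step exactly one monomial (the $w^0$ coefficient of $\omega^j$, a single term $\gamma^{j+1}_0 z^{N-j}$ hit against $w^{-(m+1)}$) sticks out of the admissible rectangle, so one scalar $k^{j+1}$ suffices to kill it; and the required nonvanishing $\gamma^{j+1}_0\neq0$ is not a symmetric-function identity but simply the statement that the previously constructed $\hat p^{m-N+j}_{r,m}$ has exact lex degree $(r,m-N+j)$, which the induction carries along. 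Each step drops the homogeneous degree by one, so the process terminates after $k-(m-N)$ steps with the claimed combination. Finally, your contradiction argument for $a_{m-k,0}\neq0$ via corner coefficients is more complicated than needed: if $a_{m-k,0}=0$, then every remaining generator is orthogonal to $x^ry^k$ as well (each $q_{r+j}$, $j\geq1$, annihilates all $x^i$, $i\leq r$, and each $\tilde q_{m+1+j}$ annihilates all $y^l$, $l\leq m$), so $p^k_{r,m}$ would be orthogonal to itself, contradicting norm one; the paper obtains the same conclusion from the identity $\|\hat p^k_{r,m}\|^2=c_{m-k}\hat a_{m-k,0}>0$.
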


\begin{proof} From Lemma~\ref{sbtwo} we see that $\tilde q_{m+1+j}(x,y)$ is
orthogonal to all monomials $y^i$, $0\le i\le m$ and
$q_{r+j}(x,y)$, $j=1,\dots, k-(m-N)-1$ is orthogonal to all $x^l$,
$l =0,\dots, r$.  Furthermore $q_r(x,y)U_k(y)$ is orthogonal to $x^iy^j$
for $0\le i <r$, $0\le j\le m$ and for $i=r$, 
$0\le j < k$.  Thus the right hand side of the above equation satisfies the 
required orthogonality conditions.  We now show that it is a linear 
combination of the monomials of
degree less than or equal to $(r,k)$ in $\Pi_{n,m}$ ordered 
lexicographically.  Set $S_{r,m-N+1}=q_r(x,y)U_{m-N+1}(y)$,
$\tilde S_{r-N,m+1}=\tilde q_{m+1}(x,y)U_{r-N}(x)$,
$\hat S_{r,m-N+1}=T(q_r(x,y)U_{m-N+1}(y))$ and
$\hat{\tilde S}_{r-N,m+1}=T(\tilde q_{m+1}(x,y)U_{r-N}(x))$.  Then from 
Lemmas~\ref{lem2.1} and \ref{lem2.4} we find that
\begin{align*}
\hat S_{r,m-N+1}&= z^{-r}w^{-(m-N+1)}\sum^{2N}_{i=0} 
h_i\left(\frac{w+1/w}2\right) z^i\\
&= z^{-r} w^{-(m+1)}\omega(z,w)w^N\omega(z,1/w)\end{align*}
which contains $w^{-(m+1)}$.  Therefore we must eliminate the $w^0$
term in $\omega(z,w)w^N\omega(z,1/w)$.  From Lemma~\ref{lem2.1} we have that 
this is associated with $z^N$.  Now
\begin{align*}
\hat{\tilde S}_{r-N,m+1}&= w^{-m-1}\sum^{2N}_{i=0}\tilde h_i
\left(\frac{z+1/z}2\right)w^iz^{-r+N}\\
&= w^{-m-1} z^{-r}\omega(z,w)z^N\omega(1/z,w),
\end{align*}
where equation~\eqref{th} has been used to obtain the last equality. From the 
definition of $\omega(z,w)$ we see that the coefficient of $z^Nw^0$
in $z^N\omega(1/z,w)$ is 1 and that $z^N\omega(1/z,w)$ and $w^N\omega(z,1/w)$
are homogeneous polynomials of degree $N$ in $z$ and $w$.  Write
$$w^N\omega(z, 1/w) = \gamma^1_Nz^N+\gamma^1_{N-1}z^{N-1}w+\cdots + w^N$$
and 
$$z^N\omega(1/z,w)=z^N+\cdots + \gamma^1_N w^N.$$
Then
$\hat S_{r,m-N+1}-k^1_N\hat{\tilde S}_{r-N,m+1}$ does not contain $w^{-(m+1)}$ 
if $k^1_N=\gamma^1_N$. Thus the polynomial $\hat p^{m-N+1}_{r,m} =
S_{r,m-N+1}-k^1_{N}\tilde S_{r-N,m+1}$ is of lexicographical degree $(r,
m-N+1)$ and is orthogonal to all polynomials of degree less than $(r, m-N+1)$. 
Note that
$$\hat  S_{r,m-N+1}-k^1_{N}\hat{\tilde S}_{r-N,m+1}=z^{-r}
w^{-m} \omega(z,w)\omega^1(z,w)$$ 
where $\omega^1(z,w)$ is a polynomial homogeneous of degree
$N-1$ in $z$ and $w$.

To continue, set
$$S^1_{r,m} = S_{r,m-N+1}-k^1_{N}\tilde S_{r-N,m+1}$$
and
$$\tilde S^1_{r,m} = \tilde S_{r-N+1,m}-k^1_{N}S_{r+1,m-N}.$$
From the above discussion we see that $T(S^1_{r,m})=z^{-r}w^{-m}
\omega(z,w)\omega^1(z,w)$ where $\omega^1(z,w)$ is a polynomial homogeneous 
in $z$ and $w$ of degree $N-1$, i.e.,
$$\omega^1(z,w)=\sum^{N-1}_{i=0} \gamma^2_i z^iw^{N-1-i}$$
with $\gamma^2_0\ne 0$ since $\hat p^{m-N+1}_{r,m}$ is of degree
$(r,m-N+1)$.  Therefore 
$T(S^1_{r,m+1})=z^{-r}w^{-m-1}\omega(z,w)\omega^1(z,w)$. 
Likewise because of the relationship between $S_{r,m}$ and $\tilde S_{r,m}$ 
we find $T(\tilde S^1_{r,m+1})=z^{-r}w^{-m-1}\omega(z,w) \tilde \omega^1(z,w)$ 
where $\tilde\omega^1(z,w)=z^{N-1}w^{N-1}\omega^1(1/z,1/w)$.  If we choose
$k^2_{N-1} = \frac{-\gamma^2_{N-1}}{\gamma^2_0}$ then
$T(S^1_{r,m+1}-k^2_{N-1}\tilde S^1_{r,m+1})$ does not contain $w^{-(m+1)}$.
 Hence
$$\hat p^{m-N+2}_{r,m} = S^1_{r,m+1}-k^2_{N-1}\tilde S^1_{r,m+1}$$
is of lexicographical
degree $(r,m-N+2)$ and is orthogonal to all monomials in $\Pi_{n,m}$ of
lexicographical degree less than $(r,m-N+2)$.

Set
$$S^2_{r,m} = S^1_{r,m+1}-k^2_{N-1}\tilde S^1_{r,m+1}$$
and 
$$\tilde S^2_{r,m}=\tilde S^1_{r+1,m}-k^2_{N-1}S^1_{r+1,m}.$$
The above discussion shows that
$$T(S^2_{r,m})=z^{-r}w^{-m} \omega(z,w)\omega^2(z,w)$$
with $\omega^2(z,w)$ a polynomial homogeneous in $z$ and $w$ of
degree $N-2$  with the coefficient of $w^{N-2}$ being nonzero. 
Likewise $T(\tilde S^2_{r,m})=z^{-r}w^{-m}\omega(z,w)\tilde \omega^2(z,w)$
with $\tilde \omega^2(z,w)=z^{N-2}w^{N-2}\omega^2(1/z,1/w)$.  
The construction of an orthogonal set of polynomials of the correct degree now 
follows by induction, that is,
\begin{align*}
&\hat p^k_{r,m} (x,y)\\&=\sum^{k-(m-N)-1}_{j=0} \left\{\hat a_{m-k,j} 
q_{r+j}(x,y)U_{k-j}(y)+\hat b_{m-k,j}\tilde q_{m+1+j}(x,y)
U_{r+k-m-1-j}(x)\right\}.
\end{align*}

To complete the proof we show that the norm of $\hat p^k_{r,m}$ depends only 
on $m-k$. From the definition of $q$ and $\tilde q$ and Lemma~\ref{sbtwo} we 
see that 
$$
\hat p^k_{r,m}(x,y) = c_{m-k}U_r(x)U_k(y)+\text{terms of lower lex degree}.
$$
From the orthogonality properties of $q$ and $\tilde q$ we find,
$$\|\hat p^k_{r,m}\|^2=c_{m-k}\langle\hat p^k_{r,m}, U_r(x)U_k(y)
\rangle=c_{m-k}\hat a_{m-k,0}\langle q_r(x,y)U_k(y), U_r(x)U_k(y)\rangle.$$
By orthogonality $U_r(x)$ may be replaced by $q_r(x,y)$ showing that
$\|\hat p^k_{r,m}\|^2=c_{m-k}\hat a_{m-k,0}$ which proves the result.
\end{proof}

A similar discussion shows

\begin{thm}\label{thm2.6} 
Let $\tilde p^l_{n,t}(x,y)$, $n-N<t\le n$, $n,t\ge 2N$  be 
of reverse lexicographical degree $(l, t)$ orthogonal to all monomials in
$\Pi_{n,m}$ of reverse lexicographical degree less than
$(l, t)$. Then there exists constants $\tilde a_{t-l,j}, \tilde b_{t-l,j}$,
$j=0,\dots, l- (n-N)-1$ such that 
$$\tilde p^l_{n,t}(x,y)=\sum^{l-(n-N)-1}_{j=0}\left\{
\tilde a_{t-l,j}\tilde q_{t+j}(x,y)U_{l -j} (x)+\tilde b_{t-l,j}q_{n+1+j}(x,y)
U_{t+l-n-1-j} (x)\right\}.$$
\end{thm}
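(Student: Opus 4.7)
The plan is to mirror the proof of Theorem~\ref{thm2.5} verbatim, interchanging the roles of $x$ and $y$ (equivalently $z$ and $w$, and $q$ and $\tilde q$). The nontrivial range is $l>n-N$, for otherwise the sum over $j$ is empty and Lemma~\ref{lem2.3} already produces the polynomial. The starting candidate is $\tilde q_t(x,y)U_l(x)$ from Lemma~\ref{lem2.3}: it has revlex degree $(l,t)$ and the correct orthogonality but its $x$-degree $l+N$ exceeds $n$, and this excess must be eliminated by the iterated corrections indexed by $j$.

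Orthogonality of every proposed summand comes for free: by Lemma~\ref{sbthree}, each $\tilde q_{t+j}(x,y)U_{l-j}(x)$ is orthogonal in $y$ to polynomials of $y$-degree less than $t+j$, killing every $x^i y^{j'}\in\Pi_{n,m}$ with $j'<t$; by Lemma~\ref{sbtwo}, each $q_{n+1+j}(x,y)U_{t+l-n-1-j}(y)$ is orthogonal in $x$ to polynomials of $x$-degree less than $n+1$, killing the remaining monomials with $j'=t$ and $i<l\le n$. So any linear combination of the two families is orthogonal to every monomial of revlex degree strictly below $(l,t)$; only the revlex-leading behavior needs to be fixed. Using Lemmas~\ref{lem2.1} and~\ref{lem2.4} together with the factorization $\tilde h(x,w)=\omega(z,w)\omega(1/z,w)$ from \eqref{th}, the map $T:K\to\hat K$ sends
\begin{equation*}
\tilde q_{t+j}(x,y)U_{l-j}(x)\ \longmapsto\ z^{-(l-j+N)}w^{-(t+j)}\,\omega(z,w)\bigl[z^N\omega(1/z,w)\bigr],
\end{equation*}
\begin{equation*}
q_{n+1+j}(x,y)U_{t+l-n-1-j}(y)\ \longmapsto\ z^{-(n+1+j)}w^{-(t+l-n-1-j+N)}\,\omega(z,w)\bigl[w^N\omega(z,1/w)\bigr],
\end{equation*}
and by Lemma~\ref{lem2.1} both bracketed factors are homogeneous of degree $N$ in $z$ and $w$. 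Hence the two families lie on the same $(z,w)$-antidiagonal $z^{\alpha}w^{\beta}$, $\alpha+\beta=-(l+t+N)$, and feed directly into the elimination scheme of Theorem~\ref{thm2.5}.

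At stage $s$ the accumulated combination has $T$-image $z^{-(l+N-s)}w^{-t}\omega(z,w)\omega^s(z,w)$ with $\omega^s$ homogeneous of degree $N-s$, and the next coefficient is read off by matching the $z^{N-s}$-corner of $\omega^s$ against the corresponding corner of $\tilde\omega^s(z,w)=z^{N-s}w^{N-s}\omega^s(1/z,1/w)$. After $l-(n-N)$ stages the $T$-image contains no $z$-power more negative than $z^{-l}$, so the polynomial lies in $\Pi_{n,m}$ with revlex degree $(l,t)$, and uniqueness of the revlex orthonormal polynomials identifies it with $\tilde p^l_{n,t}$ up to a positive constant. The norm computation at the end of Theorem~\ref{thm2.5} transfers unchanged to show that the coefficients $\tilde a_{t-l,j}$, $\tilde b_{t-l,j}$ depend only on $t-l$ and that $\tilde a_{t-l,0}\ne 0$. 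The only delicate point is the inductive verification that each new $\omega^{s+1}$ is again homogeneous of degree $N-s-1$ with a non-zero $w^{N-s-1}$-corner so that the iteration does not stall; this is precisely the ``Likewise'' step of Theorem~\ref{thm2.5} and goes through verbatim under the swap $z\leftrightarrow w$.
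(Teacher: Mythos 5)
Your proof is correct and coincides with the paper's: the paper gives no separate argument for Theorem~\ref{thm2.6} beyond the phrase ``A similar discussion shows,'' meaning exactly the mirror of the proof of Theorem~\ref{thm2.5} under $x\leftrightarrow y$, $z\leftrightarrow w$, $q\leftrightarrow\tilde q$ that you carry out (with the second factor correctly read as $U_{t+l-n-1-j}(y)$, as in Theorem~\ref{thm2.7}). The only blemish is a swapped pair of corner labels --- given your own stage formula $z^{-(l+N-s)}w^{-t}\omega(z,w)\omega^s(z,w)$, the coefficient to be cancelled sits at the $w^{N-s}$-corner (the $z^0$ end) of $\omega^s$, while it is the $z^{N-s}$-corner that must be nonzero for the iteration not to stall --- an interchange of bookkeeping that does not affect the argument.
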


The above discussion gives,

\begin{thm}\label{thm2.7}
Let $\sigma (x,y)$ be given as in equations \eqref{sigma1} and
\eqref{ome}.  Let $\Pi_{n,m}={\rm span}\{x^iy^j, 0\le i\le n$;
$0\le j\le m\}$ with $n>2N$ and $m>2N$.  Then for
$2N\le r\le n$, $0\le k\le m $
\begin{equation*}
\begin{split}
&p^k_{r,m}(x,y) = \\
&\begin{cases}
q_{r}(x,y)U_{k}(y) \quad\text{ if } \quad k\leq m-N\\
\sum^{k-(m-N)-1}_{j=0}\left\{ 
a_{m-k,j} q_{r+j} (x,y)U_{k-j}(y)+ b_{m-k,j} \tilde q_{m+1+j}
(x,y)U_{r+k-m-1-j}(x)\right\} \\
\qquad\qquad\qquad\;\;\text{ if } \quad k>m-N. 
\end{cases}
\end{split}
\end{equation*}
Likewise for $2N\le t\le m$
\begin{equation*}
\begin{split}
&\tilde p^l_{n,t}(x,y)= \\
&\begin{cases}
\tilde q_{t}(x,y)U_{l}(x) \quad\text{ if } \quad l\leq n-N \\
\sum^{l-(n-N)-1}_{j=0}\left\{ \tilde a_{t-l,j}
\tilde q_{t+j}(x,y)U_{l-j}(x)+\tilde b_{t-l,j}q_{n+1+j}(x,y)
U_{t+l-n-1-j}(y)\right\}\\
\qquad\qquad\qquad\;\,\text{ if } \quad l> n-N. 
\end{cases}
\end{split}
\end{equation*}
\end{thm}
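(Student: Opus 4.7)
The plan is to assemble Theorem~\ref{thm2.7} from the pieces already established: Lemma~\ref{lem2.2}, Lemma~\ref{lem2.3}, Theorem~\ref{thm2.5}, and Theorem~\ref{thm2.6}. The statement is a case split on whether the ``$y$-index'' $k$ (respectively the ``$x$-index'' $l$) lies far enough below $m$ (respectively $n$) that the product-type formula from the easy regime applies, or instead falls in the window of width $N$ where mixing between the $q_{\bullet}$ and $\tilde q_{\bullet}$ blocks is forced.

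First I would verify the hypotheses. By Lemma~\ref{lem2.1}, the factorization $h(z,y)=\omega(z,w)\omega(z,1/w)$ with $\omega$ as in \eqref{ome} gives $\deg_y h_i \le N - |N-i|$ and $h_0(y)=1$, so the maximal $y$-degree $\kappa$ of the coefficients of $h$ is at most $N$; the same bound $\gamma\le N$ holds for $\tilde h$ by symmetry. In particular the threshold $k \le m-\kappa$ appearing in Lemma~\ref{lem2.2} is implied by $k\le m-N$, and the analogous remark holds in the reverse ordering.

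For the first formula, the case $k \le m-N$ is handled directly by Lemma~\ref{lem2.2}: it exhibits $q_r(x,y)U_k(y)$ as a norm-one element of $\Pi_{n,m}$ of lexicographic degree $(r,k)$ orthogonal to every monomial of lower lexicographic degree, and since $h_0=1$ forces the coefficient of $x^ry^k$ to be positive, uniqueness of the Gram--Schmidt orthonormalization identifies this polynomial with $p^k_{r,m}$. The case $k > m-N$ is exactly the content of Theorem~\ref{thm2.5}, which under $r,m \ge 2N$ produces the expansion displayed in the theorem with $a_{m-k,0}\neq 0$. The formulas for $\tilde p^l_{n,t}$ follow by the symmetric argument, using Lemma~\ref{lem2.3} when $l\le n-N$ and Theorem~\ref{thm2.6} when $l> n-N$.

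No genuinely new difficulty arises. The only bookkeeping is to confirm that the uniform hypothesis $r,m,t,n\ge 2N$ in Theorem~\ref{thm2.7} is strong enough to subsume both the lower bound $r,t\ge \lceil (N-1)/2\rceil$ required by Lemmas~\ref{lem2.2}--\ref{lem2.3} and the bound $r,m\ge 2N$ required by Theorems~\ref{thm2.5}--\ref{thm2.6}; since $2N\ge\lceil (N-1)/2\rceil$ this is automatic, so the two regimes meet without a gap and the statement follows.
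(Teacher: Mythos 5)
Your proposal is correct and takes essentially the same route as the paper: Theorem~\ref{thm2.7} appears there with no separate proof beyond the phrase ``the above discussion gives,'' i.e.\ it is exactly the assembly of Lemma~\ref{lem2.2} (with the identification via positivity of the lex-leading coefficient, using $h_0=1$ and $\kappa\le N$ from Lemma~\ref{lem2.1}) for $k\le m-N$, Theorem~\ref{thm2.5} for $k>m-N$, and symmetrically Lemma~\ref{lem2.3} and Theorem~\ref{thm2.6} for the reverse lexicographical case. One trivial bookkeeping remark: since $h(z,y)=\omega(z,w)\omega(z,1/w)$ has degree $2N$ in $z$, the lower bound supplied by Lemma~\ref{lem2.2} is $r\ge\lceil(2N-1)/2\rceil=N$ rather than your $\lceil(N-1)/2\rceil$, but either way it is subsumed by the hypothesis $r\ge 2N$, so your argument goes through unchanged.
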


\section{Recurrence Coefficients}

We now examine the consequences for the recurrence coefficients
if we assume that the weights are of the form discussed above. From the one
variable theory one might expect that the recurrence coefficients have some
simple structure. We begin with the total degree ordering.
\begin{thm}\label{asycoeff} Consider the weight given by 
equation~\eqref{sbone}.
For $\lceil \frac{N-1}{2} \rceil \le n$,
\begin{equation}\label{ayn}
A_{y,n}= \left[\begin{matrix} C_{y,n}&0&\cdots&0&0\\ 0&1/2& & &\\\vdots& 
&\ddots& &\\0& & & 1/2&0\end{matrix}\right],
\end{equation}
where $C_{y,n}$ is a $\lceil \frac{N-1}{2} \rceil \times \lceil \frac{N-1}{2} 
\rceil $ 
lower triangular matrix with positive diagonal elements
\begin{equation}\label{byn}
B_{y,n}=\left[\begin{matrix} D_{y,n}&0&\cdots&0\\ 0&0& & \\\vdots& &\ddots& 
\\0& & & 0\end{matrix}\right],
\end{equation}
where $D_{y,n}$ is a symmetric $\lceil \frac{N-2}{2} \rceil \times \lceil 
\frac{N-2}{2} \rceil$  matrix,
\begin{equation}\label{axn}
A_{x,n}= \left[\begin{matrix} C_{x,n}&0\\ 0&1/2 I_{n-\lceil \frac{N-1}{2} 
\rceil, n-\lceil \frac{N+1}{2} \rceil }\end{matrix}
\right],
\end{equation}
where $C_{x,n}$ is an 
$\lceil \frac{N+1}{2} \rceil \times \lceil \frac{N+1}{2} \rceil$ 
lower Hessenberg matrix with positive entries in the upper diagonal and
\begin{equation}\label{bxn}
B_{x,n}=\left[\begin{matrix} D_{x,n}&0\\0&0\end{matrix}\right],
\end{equation}
where $D_{x,n}$ is a symmetric 
$\lceil \frac{N}{2} \rceil \times \lceil \frac{N}{2} \rceil$ matrix . 
Here $ I_{n-\lceil \frac{N-1}{2} \rceil, n-\lceil \frac{N+1}{2} \rceil}$
is an $(n-\lceil \frac{N-1}{2} \rceil) \times (n-\lceil \frac{N+1}{2} \rceil)$ 
matrix with ones on the $(i,i+1)$ entries and zeros everywhere else.
\end{thm}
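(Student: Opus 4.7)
The approach is to use the explicit form of $p_n^k$ from Theorem~\ref{th3.2} (together with the norm computation in Lemma~\ref{sbtwo}): for every $k\ge\lceil(N-2)/2\rceil$ the orthonormal polynomial takes the form
\begin{equation*}
p_n^k(x,y)=c_k\,q_k(x,y)\,U_{n-k}(y),
\end{equation*}
where $c_k$ depends only on $k$ and is \emph{independent of $n$ and $y$}: namely $c_k=1$ for $k\ge\lceil(N-1)/2\rceil$, while in the sole even-$N$ boundary case $k=N/2-1$ one has $c_k=1/\sqrt{1-h_N}$, which is still a scalar because the degree restriction $\deg h_N\le0$ of Lemma~\ref{sbtwo} forces $h_N$ to be a constant. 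This uniform constancy of the normalization is what lets the classical Chebyshev three-term recurrences pass through cleanly.

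The row structure of each recurrence matrix is then read off by direct substitution. Applying $yU_{n-k}(y)=\tfrac12(U_{n-k+1}+U_{n-k-1})$ gives
\begin{equation*}
y\,p_n^k=\tfrac12\,p_{n+1}^k+\tfrac12\,p_{n-1}^k,\qquad k\ge\lceil(N-2)/2\rceil,
\end{equation*}
which pins down row $k$ of $A_{y,n}$ (value $1/2$ at $(k,k)$, zeros elsewhere) and shows row $k$ of $B_{y,n}$ vanishes. The dual identity $x\,q_k=\tfrac12(q_{k+1}+q_{k-1})$, immediate from the definition of $q_k$ and the three-term recurrence for the $U_j(x)$, similarly yields $x\,p_n^k=\tfrac12\,p_{n+1}^{k+1}+\tfrac12\,p_{n-1}^{k-1}$ for $k\ge\lceil(N+1)/2\rceil$; at the even-$N$ boundary row $k=N/2$ the $p_{n-1}^{N/2-1}$ coefficient picks up the scalar $\sqrt{1-h_N}$, but the corresponding row of $B_{x,n}$ still vanishes.

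Column information comes out by moving the multiplier through the inner product: for $j\ge\lceil(N-2)/2\rceil$,
\begin{equation*}
(A_{y,n})_{k,j}=\langle y\,p_n^k,\,p_{n+1}^j\rangle=\langle p_n^k,\,y\,p_{n+1}^j\rangle=\tfrac12\langle p_n^k,p_n^j\rangle=\tfrac12\delta_{k,j},
\end{equation*}
where orthogonality between distinct total-degree layers kills the $p_{n+2}^j$ term; an entirely parallel computation produces $(A_{x,n})_{k,j}=\tfrac12\delta_{k,j-1}$ for $j\ge\lceil N/2\rceil$. Combined with the row conclusions and with the lower-triangular/Hessenberg plus positive-diagonal structure of $A_{y,n}$ and $A_{x,n}$ already noted just after \eqref{1.2}, these force the block decompositions \eqref{ayn} and \eqref{axn}. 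For $B_{y,n}$ and $B_{x,n}$, vanishing of the relevant rows together with the symmetry of $B$ forces the corresponding vanishing of columns, giving \eqref{byn} and \eqref{bxn}.

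The main technical obstacle is the careful bookkeeping at the two even-$N$ boundary indices ($k=N/2-1$ for the $y$-recurrence, $k=N/2$ for the $x$-recurrence), which is what shrinks the $D$-blocks by one compared to the naive count. One must verify that the scalar factor $\sqrt{1-h_N}$ introduced by the boundary normalization is compatible with the transpose symmetry of the recurrence; the key check is that $\langle x\,p_n^{N/2},\,p_{n-1}^{N/2-1}\rangle$ and $\langle x\,p_{n-1}^{N/2-1},\,p_n^{N/2}\rangle$ both evaluate to $\sqrt{1-h_N}/2$, which uses the definitions of $q_{N/2\pm1}$ and $q_{N/2-2}$ together with the identity $\langle U_{N/2},q_{N/2}\rangle_{\mu_y}=\pi/2$ already implicit in the proof of Lemma~\ref{sbtwo}. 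Once this is in hand the remaining verifications are routine.
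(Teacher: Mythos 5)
Your proof follows essentially the same route as the paper's: it rests on the product form $p_n^k=c_k\,q_k(x,y)U_{n-k}(y)$ from Theorem~\ref{th3.2} together with the Chebyshev recurrence $yU_{n-k}=\tfrac12(U_{n-k+1}+U_{n-k-1})$ and the shift identity $xq_k=\tfrac12(q_{k-1}+q_{k+1})$, which is exactly how the paper pins down the rows of $A_{y,n}$, $B_{y,n}$, $A_{x,n}$, $B_{x,n}$ before the triangular/Hessenberg structure and the symmetry of the $B$-matrices supply the remaining block form. If anything you are more explicit than the printed proof, which starts its computation at $i\ge\lceil\frac{N-1}{2}\rceil$ (resp.\ $i\ge\lceil\frac{N+1}{2}\rceil$) and leaves the even-$N$ boundary rows $k=N/2-1$ and $k=N/2$, where the constant normalization $\sqrt{1-h_N}$ enters, implicit.
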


\begin{proof} If we examine the matrix elements in $A_{y,n}$ we see that
   for $\lceil \frac{N-1}{2} \rceil \le i$,
$$[A_{y,n}]_{i,j}=\int_{-1}^1\int_{-1}^1 y q_i(x,y)U_{n-i}(y) p^j_{n+1}(x,y)
d\mu(x,y).$$
The recurrence formula for the Chebyshev
polynomials shows that 
$y q_i(x,y)U_{n-i}(y)=\frac{1}{2}(p^i_{n-1}(x,y)+p^i_{n+1}(x,y))$
which implies equation~\eqref{ayn}. Equation~\eqref{byn} follows in a similar 
manner. To show \eqref{axn} we write for  $\lceil \frac{N+1}{2} \rceil \le i$
$$[A_{x,n}]_{i,j}=\int_{-1}^1\int_{-1}^1 x q_i(x,y)U_{n-i}(y) p^j_{n+1}(x,y)
d\mu(x,y).$$
From the definition of $q_i$ we see that $xq_i=\frac{1}{2}(q_{i-1}+q_{i+1})$. 
So that 
$xq_i(x,y)U_{n-i}(y)=\frac{1}{2}(p^{i-1}_{n-1}(x,y)+p^{i+1}_{n+1}(x,y))$
which gives \eqref{axn} . Equation~\eqref{bxn} follows in a similar
manner. 
\end{proof}

For the polynomials ordered lexicographically we have,

\begin{thm}\label{asycoeffl} Consider the weight given by 
equation~\eqref{sbone}.
In the lexicographical ordering we have for $\lceil \frac{N+1}{2} \rceil\le n$
\begin{equation}\label{anme}
A_{n,m}= \left[\begin{matrix} 1/2 I_{m-\kappa+1}&0\\0&C_{n,m}
\end{matrix}\right],\end{equation}
where $C_{n,m}$ is a $\kappa\times\kappa$ lower triangular matrix  
with positive diagonal entries and
\begin{equation}\label{bnme}
B_{n,m}= \left[\begin{matrix} 0&0\\0&D_{n,m}\end{matrix}\right],\end{equation}
with $D_{n,m}$ a symmetric $\kappa\times\kappa$ matrix.
For weights of the form \eqref{sigma1}-\eqref{ome} and $2N< n,m$
we have
\begin{equation}\label{half}
A_{n,m}=1/2 I_{m+1} \text{ and } B_{n,m}=0
\end{equation}
and
\begin{equation}\label{thalf}
\tilde A_{n,m}=1/2 I_{n+1} \text{ and } \tilde B_{n,m}=0.
\end{equation}
\end{thm}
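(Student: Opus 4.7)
The plan is to compute the matrix entries of $A_{n,m}$ and $B_{n,m}$ directly from their definitions \eqref{anm} and \eqref{bnm}, exploiting the explicit form of $p^k_{n,m}$ furnished by Lemma~\ref{lem2.2} (for the general weight) and by Theorems~\ref{thm2.5}--\ref{thm2.7} (for the product-form weight). The engine in every case is the elementary identity
\[
x\,q_r(x,y)=\tfrac12\bigl(q_{r-1}(x,y)+q_{r+1}(x,y)\bigr),
\]
which follows from $xU_s(x)=\tfrac12(U_{s-1}(x)+U_{s+1}(x))$ applied termwise in the definition \eqref{qk} of $q_r$, plus the analogous identity for $\tilde q_t$ in the $y$ variable.

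For the first part, fix $n\ge\lceil(N+1)/2\rceil$. For every index $i$ with $0\le i\le m-\kappa$, Lemma~\ref{lem2.2} gives $p^i_{r,m}=q_r(x,y)U_i(y)$ for $r=n-2,n-1,n,n+1$. Consequently
\[
x\,p^i_{n,m}=\tfrac12\bigl(p^i_{n-1,m}+p^i_{n+1,m}\bigr),\qquad
x\,p^i_{n-1,m}=\tfrac12\bigl(p^i_{n-2,m}+p^i_{n,m}\bigr).
\]
Pairing with $p^j_{n,m}$ and using orthonormality yields
\[
(B_{n,m})_{i,j}=\int x\,p^i_{n,m}\,p^j_{n,m}\,d\mu=0,\qquad
(A_{n,m})_{i,j}=\int x\,p^i_{n-1,m}\,p^j_{n,m}\,d\mu=\tfrac12\delta_{i,j},
\]
for all $j$ (the formula for $B_{n,m}$ uses that $p^i_{n\pm1,m}\perp p^j_{n,m}$). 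By the symmetry of $B_{n,m}$ and the lower-triangularity with positive diagonal of $A_{n,m}$, this forces the block structures \eqref{anme} and \eqref{bnme}, with $C_{n,m}$ and $D_{n,m}$ collecting the only possibly nonzero entries in the last $\kappa$ rows and columns.

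For the second part, assume $h=\omega(z,w)\omega(z,1/w)$ as in \eqref{ome}, so $\kappa=N$. The goal is to upgrade the identity $xp^i_{r,m}=\tfrac12(p^i_{r-1,m}+p^i_{r+1,m})$ to the range $m-N<i\le m$, after which the previous computation gives $A_{n,m}=\tfrac12 I_{m+1}$ and $B_{n,m}=0$ without remainder. Here Theorems~\ref{thm2.5}--\ref{thm2.7} provide the explicit representation
\[
p^k_{r,m}=\sum_{j=0}^{k-(m-N)-1}\bigl\{a_{m-k,j}\,q_{r+j}(x,y)U_{k-j}(y)+b_{m-k,j}\,\tilde q_{m+1+j}(x,y)U_{r+k-m-1-j}(x)\bigr\},
\]
and the point, visible from the construction in the proof of Theorem~\ref{thm2.5}, is that the coefficients $a_{m-k,j},b_{m-k,j}$ depend on $m-k$ only, not on $r$. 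Applying $x$ termwise, using $xq_{r+j}=\tfrac12(q_{r+j-1}+q_{r+j+1})$ on the first family and $xU_{r+k-m-1-j}(x)=\tfrac12(U_{r+k-m-2-j}(x)+U_{r+k-m-j}(x))$ on the second family, and then recognising the two resulting sums as $p^k_{r-1,m}$ and $p^k_{r+1,m}$, gives exactly
\[
x\,p^k_{r,m}=\tfrac12\bigl(p^k_{r-1,m}+p^k_{r+1,m}\bigr)
\]
whenever all the indices $r-1,r,r+1$ lie in the range where the representation is valid (which is ensured by the hypothesis $n,m>2N$ after a small index check). Integrating against $p^j_{n,m}$ gives \eqref{half}. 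The statements \eqref{thalf} for $\tilde A_{n,m},\tilde B_{n,m}$ follow by the same argument with the roles of $(x,q_r)$ and $(y,\tilde q_t)$, and of $(n,m)$, interchanged, using Lemma~\ref{lem2.3} and Theorem~\ref{thm2.6}.

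The step I expect to be most delicate is verifying the independence of $a_{m-k,j},b_{m-k,j}$ on $r$ and checking that all shifted indices appearing in $xp^k_{r,m}$ remain within the range where Theorem~\ref{thm2.7} applies; everything else is bookkeeping built on the Chebyshev three-term recurrence and the orthogonality already established.
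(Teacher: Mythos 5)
Your proposal is essentially the paper's own proof written out in full: the paper, too, computes the entries of $A_{n,m}$ and $B_{n,m}$ directly from \eqref{anm}--\eqref{bnm}, derives \eqref{anme}--\eqref{bnme} from Lemma~\ref{lem2.2} together with the termwise Chebyshev identity $xq_r=\tfrac12(q_{r-1}+q_{r+1})$, and obtains \eqref{half}--\eqref{thalf} from Theorems~\ref{thm2.5} and \ref{thm2.6}, the independence of the coefficients $a_{m-k,j}$, $b_{m-k,j}$ from $r$ being exactly the point you single out. One correction to your bookkeeping: lower-triangularity of $A_{n,m}$ kills the entries with $j>i$, hence the upper-right block, but says nothing about the lower-left block ($i>m-\kappa$, $j\le m-\kappa$), so your row computation plus triangularity does not by itself force the block-diagonal form \eqref{anme}; to get those columns, move $x$ to the other factor and use Lemma~\ref{lem2.2} at level $n$, namely $(A_{n,m})_{i,j}=\langle p^i_{n-1,m},\,x\,p^j_{n,m}\rangle=\tfrac12\langle p^i_{n-1,m},\,(q_{n-1}+q_{n+1})U_j\rangle=\tfrac12\delta_{i,j}$ for all $i$ and $j\le m-\kappa$, since $q_{n+1}$ is orthogonal in $x$ to polynomials of degree at most $n$ by Lemma~\ref{sbtwo}. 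The same transposed pairing also settles the boundary worry you flag at $r=n-2$ (when $n=\lceil\frac{N+1}{2}\rceil$, or $n=2N+1$ in the second part): there one only needs that $q_{n-2}(x,y)U_i(y)$ is orthogonal to $\PP_{n,m}$, which holds simply because its $x$-degree is $n-2<n$ and its $y$-degree is at most $m$, not that it coincides with the orthonormal polynomial $p^i_{n-2,m}$.
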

\begin{proof}  The matrix elements in $A_{n,m}$ are given by
$$[A_{n,m}]_{i,j}=\int_{-1}^1\int_{-1}^1 xp^i_{n,m}(x,y)p^j_{n-1,m}(x,y)
d\mu(x,y).$$
Thus equation~\eqref{anme} follows from Lemma~\ref{lem2.2}. If the weight
is of the form \eqref{sigma1}-\eqref{ome} and $2N< n,m$ then the first part
of equation~\eqref{half} follows from Theorem~\ref{thm2.5}. The remaining
claims follow in a similar manner.  

\end{proof}

\section{Examples}
In this section  we consider several examples including the ones discussed in 
\cite{GI}.

\subsection{Example 1} Here we consider the polynomials orthogonal with
respect to the probability measure on $[-1,1]^2$
\begin{equation}\label{sucone}
d\mu=\frac{4}{\pi^2}w(x,y)\sqrt{1-x^2}\sqrt{1-y^2}dx\, dy,
\end{equation}
where 
\begin{equation}\label{w}
w(x,y)=\frac{(1-a^2)}
{4a^2(x^2+y^2)-4a(1+a^2)xy+(1-a^2)^2},
\end{equation}
and $a$ is real and $|a|<1$. This measure is of the form
equations~\eqref{sigma1} and \eqref{ome} with $N=1$ and 
$h(z,y) =  1- 2 a y z + a^2 z^2$.

From Lemma~\ref{sbtwo} we know that the polynomials $q_0(x,y)=1$ and 
$$q_k(x,y)
=\frac{1}{\sqrt{1-a^2}}\left(U_k(x)-2ayU_{k-1}(x)+a^2U_{k-2}(x)\right)$$
for $k\geq 1$ are orthonormal with respect to the measure 
$$d\mu_y(x)=\frac{2}{\pi}\frac{(1-a^2)\sqrt{1-x^2}}
{4a^2(x^2+y^2)-4a(1+a^2)xy+(1-a^2)^2}dx.$$
Applying Theorem~\ref{th3.2} shows that for $k\geq 0$ 
\begin{equation}\label{success1td}
p_n^k(x,y)=q_k(x,y)U_{n-k}(y).
\end{equation}
This gives the recurrence coefficients \cite{GI}
\begin{equation*}
A_{x,n}=\frac{1}{2}\left[\begin{matrix}
a & \sqrt{1-a^2}&  \\
 &  & 1 \\
        &         &  & \ddots \\
        &         &       &  &   & 1 
\end{matrix}\right]
\end{equation*}

\begin{equation*}
A_{y,n}=\frac{1}{2}\left[\begin{matrix}
1 &  &  \\
 & 1 &  \\
        &         &  &\ddots &\\
        &         &       &  &   1 & 0 
\end{matrix}\right],
\end{equation*}
and $B_{x,n}$, $B_{y,n}$ are $(n+1)\times (n+1)$ zero matrices.

For the polynomials obtained via the lexicographical ordering 
Theorem~\ref{thm2.7} shows that for $n,m>2$,
\begin{equation}\label{s1lex}
p^k_{n,m}=\begin{cases}
q_k(x,y)U_{k}(y) &\text{ for }k<m\\
a_1q_{n+1}(x,y)U_{m}(y)+b_1\tilde q_{m+1}(x,y)U_{n-1}(x)
&\text{ for }k=m.
\end{cases}
\end{equation}

\subsection{Example 2} 
Consider the weight $w(x,y)$ associated with 
$$
 h(z,y) = (1- 2 b z) (1-2 a y z + a^2 z^2), \qquad   |b| < 1/2, \quad |a| <1. 
$$
Lemma~\ref{sbtwo} shows that  for $k\geq 1$ the polynomials 
\begin{equation} \label{w1}
q_k(x,y) =U_k(x) - 2 (a y + b) U_{k-1}(x)  
 + a (4 b y + a) U_{k-2}(x) - 2 a^2 b U_{k-3}(x)
\end{equation}
are orthogonal with respect to the probability measure 
$$
 d\mu_y(x) = \frac{2}{\pi} \frac{w(x,y) \sqrt{1-x^2}} {(1-4b x +4 b^2) }
$$
on $[-1,1]$, where $w(x,y)$ is defined in \eqref{w}. It is easy to see 
that 
\begin{equation}\label{Ex2}
    \int_{-1}^1 d \mu_y(x) = \frac{1}{1 - 4 a b y +4 a^2b^2}, \quad |b| < 1/2,
\end{equation}
which is another Bernstein-Szeg\H{o} weight whose orthogonal polynomials are
easily seen to be 
$$
V_n(y)=  \begin{cases}
    U_n(y)-2abU_{n-1}(y), & \text{ if }|2ab|>1\\  
    2a b U_n(y)- U_{n-1}(y), & \text{ if }|2ab|< 1.
\end{cases}
$$
Hence, applying Theorem~\ref{th3.2} and defining $q_0(x,y) =1$ shows that 
\begin{equation}\label{Ex2-2}
p_n^k(x,y)= \begin{cases}
    V_n(y), & k =0, \\
q_k(x,y)U_{n-k}(y), & 1 \le k\le n \end{cases}
\end{equation}
give a complete set of orthogonal polynomials in the total degree ordering.
It follows that the recurrence coefficients in the total degree ordering
are as suggested in \cite{GI}, i.e.  $A_{x,n}$ and 
$A_{y,n}$ are the same as in the previous example. For  $B_{x,n}$ we have 
$$B_{x,0}=[b], \quad 
B_{x,1}=
b\left[\begin{matrix}
1-a^2 & -a\sqrt{1-a^2}  \\
-a\sqrt{1-a^2} & a^2\\
\end{matrix}\right]$$
and for $n\geq 2$, $B_{x,n}$ is the block matrix
$$B_{x,n}=
\left[\begin{matrix}
B_{x,1} & 0 \\
0 & 0\\
\end{matrix}\right].$$
The matrices $B_{y,n}$ are identically equal to zero for $n\geq 1$ and 
$B_{y,0}=[ba]$.

\subsection{Example 3} 
For completeness we include also the formulas for the 
second example considered in \cite{GI}. The measure in this case is given 
by
\begin{equation}\label{m2}
\begin{split}
d\mu(x,y)=&\frac{2z_0}{\pi^2(x_0-x)}w(x,y)\sqrt{1-x^2}\sqrt{1-y^2}dxdy \\
&\quad + \frac{2(1-z_0^2)}{\pi}w(x,y)\delta(x-x_0)\sqrt{1-x^2}\sqrt{1-y^2}dxdy,
\end{split}
\end{equation}
where again $w(x,y)$ is defined in \eqref{w}.
Here $z_0=\frac{1}{2b}$ is real with magnitude less than one, 
$x_0=\frac{1}{2}(z_0+z_0^{-1})$, the first part of the measure is on 
$[-1,1]^2$, the second is on $\RR\times [-1,1]$ and $w(x,y)$ is given by 
\eqref{w}. 

If $|b|<1/2$ then $\frac{z_0}{\pi(x_0-x)}w(x,y)\sqrt{1-x^2}$ is exactly the 
Bernstein-Szeg\H{o} weight discussed in Example 2. When $|b|>1/2$, $h$ is no
longer stable and a modification of the calculation in Lemma \ref{sbtwo} 
shows that the polynomials given in \eqref{w1} are orthonormal with respect 
to the measure 
\begin{equation*}
d\mu_y(x)=\frac{z_0}{\pi(x_0-x)}w(x,y)\sqrt{1-x^2}dx + (1-z_0^2)w(x,y)
\delta(x-x_0)\sqrt{1-x^2}dx.
\end{equation*}
A residue calculation gives, 
\begin{equation}\label{w0}
\int d\mu_y(x)=\frac{1}{1+4a^2b^2-4aby},
\end{equation}
which is the same as \eqref{Ex2}. Hence, it follows that the 
polynomials given in 
\eqref{Ex2-2} are also orthogonal with respect to $d\mu(x,y)$ in \eqref{m2}.

The polynomials and the recurrence coefficients in the lexicographical
ordering are given in \cite{GI} using a Darboux transformation connecting
the two examples.

\subsection{Example 4} In this example we consider 
$$
  h(z,y) = (1-2 a_1 y z + a_1^2 z^2) (1-2 a_2 y z + a_2^2 z^2), \qquad  
  |a_1|, |a_2| < 1.
$$
with corresponding measure,
$$
d\mu(x,y)=\frac{4}{\pi^2}  w(a_1;x,y) w(a_2;x,y)\sqrt{1-x^2}\sqrt{1-y^2} dx dy,
$$
in which $w(a;x,y)$ is defined as in \eqref{w}.
Lemma~\ref{sbtwo} shows that for $k \ge 1$ the polynomials
\begin{align*}
q_k(x,y) = & U_k(x) - 2a_1 a_2 (a_1+a_2)U_{k-1}(x)  
+ (a_1^2+a_2^2+4a_1 a_2 y^2) U_{k-2}(x) \\ 
  &   - 2a_1 a_2 (a_1+a_2)U_{k-3}(x) + a^2 b^2 U_{k-4}(x),
\end{align*}
where $U_{-m-2}(x) = - U_m(x)$,  are orthogonal with respect to the measure 
$d\mu_y(x)$.
On the other hand, a simple 
computation via residues shows that
$$
\int_{-1}^1 d\mu_y(x) 
= \frac{1 + a_1 a_2}{(1 - a_1 a_2)  ((1+ a_1 a_2)^2- 4 a_1 a_2 y^2  )}.
$$
This is again a Bernstein-Szeg\H{o} weight, whose orthogonal polynomials
are given by 
$$
    V_n (y) := U_n(y) - a_1 a_2 U_{n-1}(y), \qquad n \ge 0. 
$$
Hence, applying Theorem~\ref{th3.2} and defining $q_0(x,y) =1$, we see that 
the polynomials
$$
p_n^k(x,y)= \begin{cases}
    V_n(y), & k =0, \\
  q_k(x,y)U_{n-k}(y), & 1 \le k\le n, \end{cases}
$$  
are orthogonal with respect to the weight function $d\mu(x,y) =
 d\mu_y(x) \sqrt{1-y^2}dy$.

\begin{rem}

These examples of Theorem~\ref{th3.2} show that we can derive a complete basis
of orthogonal polynomials in total order if $N$, the degree of $h(z,y)$, is 
less than or equal to $4$.  There are other such weight functions, for 
example, those corresponding to 
$$
  h(z,y) = (1- b_1 z) (1-b_2 z) (1-2 a y z + a^2 z^2), \qquad 
|b_1|, |b_2| < 1, \quad |a| <1.
$$
In this case the integral $d\mu_y(x)$ yields 
a Bernstein-Szeg\H{o} weight with two linear factors in $y$ (see Example 2).
\end{rem}

\end{document}